\pgfplotsset{compat=1.9}
\theoremstyle{plain}
\newtheorem{Th}{Theorem}[section]
\newtheorem{Lemma}[Th]{Lemma}
 \theoremstyle{definition}
\newtheorem{Def}[Th]{Definition}
\newtheorem{defn}[Th]{Definition}
\newtheorem{Rem}[Th]{Remark}
\newtheorem{?}[Th]{Problem}
\newtheorem{Ex}[Th]{Example}
\newtheorem{Prob}[Th]{Open problem}
\numberwithin{equation}{section}
\definecolor{qqwuqq}{rgb}{0,0,0}
\begin{document}

\title[Analytic torsion]
{Analytic torsion  on manifolds with fibred boundary metrics}

\author{Mohammad Talebi}
	\address{Universit\"at Oldenburg, Germany}
	\email{m0hammadtalebi@aol.com}
	\date{\today}

\maketitle

\begin{abstract} In this paper, we construct the renormalized
analytic torsion in the setup
of manifold endowed with fibred boundary  metrics.
The method of construction is to determine the 
asymptotic of heat kernel, both in short time regime
and long time regime and apply these asymptotics together with renormalization to
determine the  renormalized zeta function and the determinant of
Hodge Laplacian.
\end{abstract}

\section{Introduction} \label{sec intro}
Analytic torsion was introduced by Ray and Singer \cite{raysin}
as analytic 
counterpart of Reidemeister torsion in topology.
 Ray and Singer conjectured that these two torsions 
 are equivalent on closed manifolds. Cheeger and M\"uller
  proved this conjecture independently.
Assume that $(M,g)$ is a closed Riemannian manifold and 
$e^{-t\Delta_{g}}(x,y) := H(t,x,y)$ is the heat kernel with respect to Hodge Laplacian $\Delta_{g}^{q}: \Omega^{q}(M) \longrightarrow \Omega^{q}(M)$, acting on the space of $q$ forms.
 i.e  fundamental solution to heat 
equation,
\begin{align} \label{heat eq}
&\partial_{t}H(t,x,y) + \Delta_{g,x}^{q}H(t,x,y) = 0,\\ \nonumber
&H(t = 0, x ,y ) = \delta (x-y).
\end{align}
One define the heat trace to be, $Tr(e^{-t\Delta_{g}^{q}}) = \int_{M} e^{-t\Delta_{g}}
(x,x)dvol_{g}.$
Assume $\Delta_{g}^{q}$ is Hodge Laplacian acting on the space of
 $q$-forms.
The corresponding  zeta function is defined to be,
\begin{equation}\label{zetaclassic}
\zeta_{q}^{M}(s) : = \frac{1}{\Gamma(s)}\int_{M}Tr(e^{-t\Delta_{g}^{q}})t^{s-1}dt.
\end{equation}
So defined (\ref{zetaclassic}) is defined on 
$Re(s) > \frac{n}{2}$ where $n = dim(M)$  but can be extended holomorphically to complex
plane $\mathbb{C}$  and especially with regular point at $s =0$ . 
The determinant of Laplacian is defined to be,
\begin{equation*}\label{detlap}
det(\Delta_{g}^{q}) := e^{-\zeta'_{q}(0)},
\end{equation*}
One defines the analytic torsion as,
\begin{equation*}\label{analtorclas}
\log T(M) := \frac{1}{2}\sum_{q =0}^{n}(-1)^{q}q\zeta'_{q}(0),
\end{equation*}
typically one would like  to generalize the definition 
of zeta function and analytic torsion on non 
compact set up or non smooth set up and obtain 
Cheeger M\"uller type statements on the new set up.
 we refer to the PhD thesis of Sher \cite{She-stab}
  in which he constructed the zeta function in the
   set up of asymptotic conic manifolds.

\medskip
In this work we consider manifold $\overline{M}$ with boundary $\partial M$ with fibration boundary structure, i.e $\partial M$ is fibred over closed base $B$ with typical closed fibre $F$.
On such a topological structure one may consider metric
  $g_{\phi}$ defined as,
\begin{equation*}
g_{\phi} = \frac{dx^{2}}{x^{4}} + \frac{\phi^{*}g_{B}}{x^{2}} + g_{F} + h(x),
\end{equation*}
where $x$ is a boundary defining function of $\partial M$,
 and $\phi$ is fibration and $g_{B}$ is a Riemannian metric 
 over base $B$ and $g_{F}$ is a symmetric bilinear form which restricts to Riemannian metric on fibre $F$ and additional assumptions as in \cite{reslaw gtv}. 
In this set up, we are going to define the concept of analytic torsion. The main difficulty arises when we consider the heat trace,
 \begin{equation*}
Tr(e^{-t\Delta_{g_{\phi}}^{q}}) = \int_{\mathbb{R}^+} e^{-t\Delta_{g_{\phi}}}
(x,x)dvol_{\phi},
\end{equation*}
i.e in $\phi$ set up, the boundary is located at infinity and therefore the integration over diagonal diverges. To address this problem, the renormalized heat trace is described in the hadamard manner, which essentially
takes into account the integration of the heat kernel along the diagonal on the finite component. The heat kernel structure theorem may be employed to take the finite part of this integral at zero
to be heat trace renormalized . 
In order to describe analytic torsion in the set up of
$\phi$ manifolds, we can explicitly describe renormalized zeta function and Laplacian determinant by means of renormalized heat trace.\\

The paper is organized as follows.
In section \ref{sec 2} we recall the set up of $\phi$
 manifolds and 
some definitions from geometric microlocal analysis
 in the sense of Melrose \cite{MelATP}, which we need
 in order to describe structure theorems of heat kernel in section \ref{sec 3}.
 We remark that these methods are in effect microlocal
 analysis due to H\"ormander, Nierenberg and Maslow
 together with manifold with corners due to Cherrf and Duady \cite{mancor} and the type of resolution process 
 basically arising in the algebraic geometric sense \cite{res} of resolution of singularities. Melrose uses these methods in order to develop elliptic theory of differential operators in several geometric set ups, as 
  b geometry \cite{MelATP} and $\phi$ geometry \cite{maz-pseudo}.\\
In section \ref{sec 3} we describe structure theorems
 of heat kernel both in 
finite time regime and long time regime on appropriate space.
 Namely on  manifolds with corners. 
In finite time regime this description is explicit \cite{heat short phi} and in long time regime we use functional calculus and asymptotic of $\phi$ resolvent of $\phi$
Hodge Laplacian at low energy \cite{reslaw gtv}
to relate heat kernel and resolvent via,
\begin{equation*}
H^{M}(t,z,z') = \int_{\Gamma} 
e^{-t\lambda}(\Delta_{\phi}-\lambda)^{-1}d\lambda,
\end{equation*}
where $\Gamma$ is a curve around the spectrum of Hodge Laplacian $\Delta_{\phi}$ oriented counter-clockwise.   \\
In section \ref{sec4} we use these structure theorems in order to determine the asymptotics of renormalized heat trace and show that the renormalized zeta function admits meromorphic continuation on whole of complex plane $\mathbb{C}$. One defines then the renormalized determinant of Laplacian and analytic torsion in the set up of $\phi$ manifolds.\\
This work is generalization of \cite{She-stab} where
 the fibres are trivial. Namely we generalize the result of 
  \cite{She-stab} for closed fibre $F$.\\
\medskip

\textit{Aknowledgment:} The author would like to thank Boris Vertman 
for his supervision for PhD thesis. Further he acknowledges the constructive comments of Daniel Grieser, Julie Rowlett and Boris Vertman  on improvements and corrections of this work.

\section{phi manifolds and geometric microlocal analysis}\label{sec 2}
Assume $\overline{M}$ is a compact manifold with boundary 
$\partial M$ and $\partial M$ has fibration structure i.e,
 $\partial M \overset{\phi}{-} B - F,$
where $\phi$ is trivialization of fibration. $B$ is base manifold and $F$ is closed manifold as fibre. Near the boundary 
one may take the product $[0,\epsilon)\times \partial M$ by
 collar neighborhood theorem, and
fix local coordinates on 
$\overline{M}$ to be, $
(x, y = (y_{1}\cdots y_{b}), z = (z_{1},\cdots z_{f}))$.
Here $ x = \rho_{\partial M}$ is  the boundary
 defining function of 
$\partial M$, i.e, $
\partial M = \{ x= 0\},\, dx \neq 0,\, x \geq 0.$
Consider the metric,
\begin{equation*}
g_{\phi} = \frac{dx^{2}}{x^{4}} + \frac{\phi^{*}g_{B}}{dx^{2}}
 + g_{F},
\end{equation*}
on $\overline{M}$ where $g_{B}$ is Riemannian metric on base $B$
 and 
$g_{F}$ is symmetric bilinear form which restricts to Riemannian metric on fibre $F$. We assume further that $\varphi: (\partial M,g_{F} + \varphi^{*}g_{B}) \longrightarrow (B,g_{B})$ is
 Riemannian submersion. Such a geometric set up is called fibred
boundary $\phi$ metric manifolds.
Intuitively the boundary is fibre bundle with base $B$
 and fibre $F$
where the boundary is viewed to be located at infinity.
The metric arises as example in gravitaitional 
instantons \cite{gravi ins}. Gravitaitional instantons 
are defined as $4$ dimension complete
 hyperk\"ahler manifolds.
They are classified in three categories. ALE or 
asymptotically locally euclidean where fibre $F$ is  a point.
The classification of ALE manifolds are given by Kronheimer
in his P.h.D thesis \cite{ale kron} and the
underlying manifold is 
minimal resolution of
$\frac{\mathbb{C}}{\Gamma}$ where $\Gamma$ is a finite 
subgroup of $SU(2)$ of type $A_{k}, D_{k}, E_{6}, E_{7}, E_{8}$.\\
The next type of gravitaitional instantons
 are ALF or asymptotically locally flat manifold 
 where fibre $F$ is  $\mathbb{S}^{1}$ and the last
  types are ALG where the fibre is
   $ F = \mathbb{S}^{1}\times\cdots \times\mathbb{S}^{1}$. We present now two simple examples of fibred boundary $\phi$ manifolds.

\begin{Ex}\label{ example 1}
Consider Euclidean space $\mathbb{R}^{n}$ and the Euclidean metric
given in polar coordinates $(r,\theta), r \in \mathbb{R}, \theta \in \mathbb{S}^{n-1}$, $g_{euc} = dr^{2} + r^{2}d\theta^{2}$. One may 
compactify Euclidean space by introducing change of variable,
$ x =\frac{1}{r}$. The metric $g_{euc}$ takes then the
 form, $
g_{sc} = \frac{dx^{2}}{x^{4}} + \frac{d\theta}{x^{2}},$
which is a simple example of $\phi$ metric with
 trivial fibre $F = \{point\}$ and base $B = \mathbb{S}^{n-1}$.
\end{Ex}

\begin{Ex}
Consider now  $\mathbb{R}^{n}\times F$ where
 $F$ is closed Riemannian manifold $(F,g_{F})$.
  Use the product metric $g_{euc} \oplus g_{F}$
and introduce change of variable $x = \frac{1}{r}$
 in the euclidean metric 
$dr^{2} + r^{2} d\theta^{2}$ as \ref{ example 1}. 
One gets the metric,  
$g = \frac{dx^{2}}{x^{4}} + \frac{d\theta}{x^{2}} + g_{F},$
which is again  example of $\phi$ metric with
fibre $F $ and base $B = \mathbb{S}^{n-1}$ and product
 $\mathbb{S}^{n+1}\times F$ at boundary.
\end{Ex}
After introducing some examples of $\phi$ metrics,
one may ask  classical analytic or geometric 
questions on such a setup.
 As such a question,
 consider heat 
equation \ref{heat eq} and fundamental solution to it
 which we denote it as $e^{-t\Delta_{g_{\phi}}}(x,x')$
  by suppressing $y,z ,y',z'$.
 In order to define the analytic torsion for $\phi$ manifolds, one needs to determine asymptotic of heat kernel  in finite time and long time regimes . To that aim, one construct resolution space which are manifold with corners and lift the integral kernel of heat kernel to those space to obtain polyhomogeneous conormal distributions in the sense made precise below. We recall the required definitions which are needed in the section \ref{sec 3} in order to study heat kernel structure. Our main references are \cite{basics} 
and \cite{MelATP} and \cite{Mel-diff}.
We define first the model space.

\begin{Def}(Manifold with corners) One defines,
\begin{itemize}
\item A manifold with corners $M$ of dimension $n$ is
 a Hausdorf 
topological space which is locally diffeomorph to 
$\mathbb{R}^{k}_{+}\times\mathbb{R}^{n-k}$.\\
\item For a manifold with corners $M$, the interior
 $\overset{o}{M}$ is the set of interior points of $M$
  i.e those points with neighborhood diffeomorph to
   $\mathbb{R}^{n}$. The boundary is 
$M \backslash \overset{o}{M}$ and is defined as union 
of connected
boundary hypersurfaces. Any intersection of two 
or more boundary hypersurfaces is called a corner.
\item For a boundary hypersurface $H$ one denote 
$\rho$ as boundary defining function for $H$ if $\rho$ is smooth in interior of $M$ and 
$\rho = 0$ on $H$ and $d\rho \neq 0$ on $H$.
\item $p$ submanifold of manifold with corner is 
defined to be 
locally coordinate submanifold.
\end{itemize}
\end{Def}
\begin{Def}(Blow up and coordinates)\label{blow up}
Assume $ P \subset M$ is a p submanifold of manifold 
with corners
$M$. For each $ p \in P$ replacing $p$ by inward spherical
 normal space yields to a space which we call it blown up
  space and it is denoted by $[M ; P]$. The process is 
  called blow up and the map,
\begin{align*}
&\beta: [ M; P] \longrightarrow M,\\
&\beta\restriction_{M \backslash \{P\}} := \text{Id},\,\beta\restriction_{P} := \mathbb{S}N(p)\mapsto p. \quad \forall \, p \in P.
\end{align*}
is called blow down map.\\
One may introduce coordinates in order to do analyse on blown
up space. 
The coordinates vary from polar coordinate on normal 
sphere or projective coordinates or as example logarithmic
 coordinates depending on the nature of problem that we deal with.
\end{Def}
On manifold with corners, the main object of study
 are distributions.
In the next definition we characterise
 polyhomogeneous distributions.
 \begin{Def}(Polyhomogeneous function)\label{polyho}
Assume $M$ is a manifold with corners and 
$\mathcal{E} = \mathcal{E}\{(E_{i}, H_{i})\}$
 is a index family as explained in \cite{basics}. 
A function $u$ defined on interior of $M$
 is polyhomogeneous conormal
on $M$ with index family $\mathcal{E}$
 and is denoted as,
$ u \in \mathcal{A}_{phg}^{\mathcal{E}}(M)$
 if in any neighborhood coordinates of boundary point $p$ ,$
u(x_{1},\cdots x_{n}) \sim \sum_{\{(z_{i},p_{i}\}\in E_{i}} a_{(z_{i},p_{i})}\prod_{i=1}^{k}x_{i}^{z_{i}}(log x_{i})^{p_{i}},$
where $a_{z_{i},p_{i}}$ are polyhomogeneous with respect to index set obtained from intersection of boundary hypersurfaces.
\end{Def}
We define now interior conormal singularity.
\begin{Def} \label{conormal def}
Assume $P \subset M$ is a interior p-submanifold.
 A distribution $u$ on $P$ is
conormal at $P$ of order $m$ if it is smooth away from $P$ an near $P = \{z=0\}$ can be expressed locally as,
\begin{equation*}
u(y,z) = \int_{\mathbb{R}^{n-k}} e^{iz\cdot\zeta}a(y, \zeta)d\zeta,
\end{equation*}
where $a$ is classical symbol of order $m$.
Order $m$ means that, $$
a(y, \zeta) \sim \sum_{j=0}^{\infty}a_{m-j}(y, \frac{\zeta}{\vert \zeta \vert})\vert \zeta \vert^{m-j},$$
with each coefficient $a_{m-j}$ smooth in $y$ and $\frac{\zeta}{\vert \zeta \vert}$. 
If $P$ intersect the boundary one require $a_{m-j}$ to be polyhomogeneous.

\end{Def}

The residue theorem may be applied in order to relate the
heat kernel to the resolvent kernel.
 We recall residue theorem from \cite{Ahlfors}.

\begin{Th}(Residue theorem) \label{res}
Let $f(z)$ be a complex valued function which is
 holomorphic except 
for isolated singularities $\alpha_{j}$ in a region 
$\Omega \subset \mathbb{C}$. Then,
\begin{equation*}
\frac{1}{2\pi i}\int_{\gamma} f(z)dz = \sum_{j} n(\gamma, \alpha_{j})\text{Res}_{z = \alpha_{j}} f(z),
\end{equation*}
where $\gamma$ is a closed path  homologous to zero
in $\Omega$ and $\gamma$ does not pass through any of the points 
$\alpha_{j}$ and $n(\gamma, \alpha_{j})$ is winding number of
$\gamma$ around $\alpha_{j}$.
\end{Th}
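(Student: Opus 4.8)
The plan is to deduce this classical residue theorem from the homology form of Cauchy's integral theorem together with the local Laurent expansion of $f$ at each singularity. The first step is to reduce to finitely many relevant singularities. Let $K$ be the union of the trace of $\gamma$ with the set $\{z \in \mathbb{C} : n(\gamma, z) \neq 0\}$. Since the winding number vanishes outside a sufficiently large disk, $K$ is bounded, and since $\{z \notin \gamma : n(\gamma,z) = 0\}$ is open, $K$ is closed; hence $K$ is compact. Moreover, $\gamma$ being homologous to zero in $\Omega$ means $n(\gamma, z) = 0$ for every $z \notin \Omega$, so $K \subset \Omega$. Because the $\alpha_j$ are isolated, only finitely many of them lie in the compact set $K$; relabel these as $\alpha_1, \dots, \alpha_m$. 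Every other singularity has $n(\gamma, \alpha_j) = 0$ and so contributes nothing to the right-hand side. Thus it suffices to prove $\frac{1}{2\pi i}\int_{\gamma} f(z)\,dz = \sum_{j=1}^{m} n(\gamma,\alpha_j)\,\text{Res}_{z=\alpha_j} f(z)$.

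Next I would introduce small circles around the enclosed singularities. For each $j = 1, \dots, m$, pick $\varepsilon_j > 0$ so small that the closed disks $\overline{D}(\alpha_j, \varepsilon_j)$ are pairwise disjoint, lie in $\Omega$, avoid the trace of $\gamma$, and contain no singularity other than $\alpha_j$; let $C_j = \partial D(\alpha_j, \varepsilon_j)$ be positively oriented. Form the cycle $\Gamma := \gamma - \sum_{j=1}^{m} n(\gamma, \alpha_j)\, C_j$, which lies in the open set $\Omega' := \Omega \setminus \{\alpha_k : k\}$ on which $f$ is holomorphic. The key verification is that $\Gamma$ is homologous to zero in $\Omega'$, i.e.\ $n(\Gamma, a) = 0$ for all $a \notin \Omega'$. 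For $a \notin \Omega$ this holds because $n(\gamma, a) = 0$ and $a$ lies outside every disk, so $n(C_j, a) = 0$. For a singularity $\alpha_k$ with $k \le m$, using $n(C_j, \alpha_k) = \delta_{jk}$ gives $n(\Gamma, \alpha_k) = n(\gamma, \alpha_k) - n(\gamma, \alpha_k) = 0$; for $k > m$ we have $n(\gamma, \alpha_k) = 0$ and $\alpha_k$ outside every disk, so again $n(\Gamma, \alpha_k) = 0$.

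Finally, applying the general (homology) version of Cauchy's integral theorem to the holomorphic function $f$ on $\Omega'$ and the null-homologous cycle $\Gamma$ gives $\int_{\Gamma} f(z)\,dz = 0$, that is, $\int_{\gamma} f(z)\,dz = \sum_{j=1}^{m} n(\gamma, \alpha_j) \int_{C_j} f(z)\,dz$. On each circle $C_j$ the Laurent series $f(z) = \sum_{n \in \mathbb{Z}} c_n^{(j)} (z - \alpha_j)^n$ converges uniformly, so integrating term by term and using $\int_{C_j}(z-\alpha_j)^n\,dz = 0$ for $n \neq -1$ and $\int_{C_j}(z-\alpha_j)^{-1}\,dz = 2\pi i$ yields $\frac{1}{2\pi i}\int_{C_j} f(z)\,dz = c_{-1}^{(j)} = \text{Res}_{z = \alpha_j} f(z)$. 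Substituting this back gives the asserted identity.

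The only genuinely delicate point is the winding-number bookkeeping: the reduction to finitely many enclosed singularities via compactness of $K$, and the verification that $\Gamma$ is null-homologous in $\Omega'$. The analytic core --- the homology form of Cauchy's theorem and term-by-term integration of the uniformly convergent Laurent series --- is entirely standard, and I would simply invoke it from \cite{Ahlfors}.
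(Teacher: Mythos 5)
Your proof is correct: the reduction to finitely many singularities via compactness of the set where the winding number is nonzero, the auxiliary cycle $\Gamma = \gamma - \sum_j n(\gamma,\alpha_j)C_j$ shown to be null-homologous in $\Omega\setminus\{\alpha_k\}$, and the term-by-term integration of the Laurent series are exactly the classical argument. The paper does not prove this statement at all --- it simply quotes the residue theorem from \cite{Ahlfors} --- and your argument is essentially the proof given in that reference, so there is nothing to reconcile.
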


We summarized all instruments which we need in 
order to study heat kernel on $\phi$ manifolds 
and determine the asymptotic of renormalized 
heat trace. In the next section \ref{sec 3}, 
we study the structure theorems of heat kernel
 in short time and long time regimes.

\section{Heat kernel in short time regime 
and long time regime}\label{sec 3}

We start this section by defining the heat kernel on
a compact Riemannian manifold
and continue to determine the heat kernel asymptotic 
on the manifold with  fibred boundary, $\phi$ metric
in finite time and long time. Using these asymptotics 
analytic torsion with respect to $\phi$ Hodge Laplacian  will
be constructed in section \ref{sec4}.

\begin{Def}
Assume $(M,g)$ is a closed Riemannian manifold and 
$\Delta_{g}^{q}: \Omega^{q}(M) \longrightarrow \Omega^{q}(M)$ is
 the Hodge Laplacian acting on the space of $q$ forms.
 A heat kernel is a function, 
 $H_{g,q}: M\times M \times [0,\infty) \longrightarrow M$
that satisfies,
\begin{itemize}
\item $H_{g,q}(x,y,t)$ is $C^{1}$ in $t$ and $C^{2}$ in $(x,y)$.
\item $\frac{\partial H}{\partial t} + \Delta_{g,x}^{q}(H) = 0$, where 
$\Delta_{g,x}^{q}$ is Hodge Laplacian on manifold $(M,g)$.
\item $\lim_{t\longrightarrow 0^{+}} H_{g,q}(x,y,t) = \delta(x-y)$
where by $\delta(x-y)$ we mean delta distribution.
\end{itemize}
\end{Def}
Then the solution of heat equation with initial condition $u(x,0) = f(x)$ is given by, $u(x,t) = \int_{M}H(x,y,t)f(y)dy.$\\
Recall that for $(M,g)$ smooth closed  manifold and
$\{\lambda_{j}\}$ spectrum of $\Delta_{g}$ on $M$ and $\psi_{j}$
 the associated 
eigenfunctions one may write,\\
\begin{equation*}
H(x,y,t) = \sum_{i} e^{-\lambda_{j}t}\psi_{i}(x)\psi_{i}(y),
\end{equation*}
and the heat trace is defined as,$
TrH(t) = \int_{M} H(x,x,t)dx = \sum_{i} e^{-\lambda_{j}t}.$
The heat trace has a asymptotic expansion as
 $t\longrightarrow 0^{+}$,
$$
TrH(t) \sim_{t\longrightarrow 0^+} (4\pi t)^{\frac{dim(M)}{2}}\sum_{j = 0}^{\infty} a_{j}t^{j},$$
where $a_{j}$ are integrals over $M$ of universal homogeneous polynomials in the curvature and its covariant derivatives.

We observe that the heat kernel $H(t,x,x')$ as integral 
kernel is supported
on $M^{2}\times\mathbb{R}^{+}$. We switch now
 to manifold with fibred 
boundary 
endowed with $\phi$ metric $(M,g_{\phi})$. One determines 
asymptotics of heat kernel in short and long time regimes 
by blow up process.\\
For finite-time regime this integral kernel lifts
on so called heat space to be polyhomogeneous conormal
kernel and for long-time regime the statement is to determine
polyhomogeneous kernel of resolvent of  $\phi$ Hodge
 Laplacian 
at low energy level
and then  residue Theorem \ref{res}  applies in order to
calculate  long time heat kernel asymptotics.\\

\subsection*{Finite time regime}

In order to determine the behaviour of heat kernel in short
time regime, we observe that the heat kernel initially 
is supported on $\overline{M}^{2}\times \mathbb{R}^{+}$.
One may use resolution process \ref{blow up} to
 obtain a manifold with corners
which we denote it
as $HM_{\phi}$. On $HM_{\phi}$ the heat kernel lifts to
polyhomogeneous conormal distribution in the sense of 
definition \ref{polyho}.
One constructs  in \cite{heat short phi} the integral
kernel of heat equation which lifts to polyhomogeneous conormal
distribution
on the heat space $HM_{\phi}$. \\
The heat space is constructed from
 $\overline{M}^{2} \times \mathbb{R}^{+},$
by resolution process. Namely, one takes the
elliptic $\phi$-space in time and blow up additionally
the diagonal of $\overline{M}^{2}_{\phi}\times \mathbb{R}$
at $ t = 0$. The space $HM_{\phi}$ can be visualized as in figure
 \ref{heat-space},
where we denote $\beta_{\phi-h}$ to be the blow down map.
The main result of \cite{heat short phi} reads as,

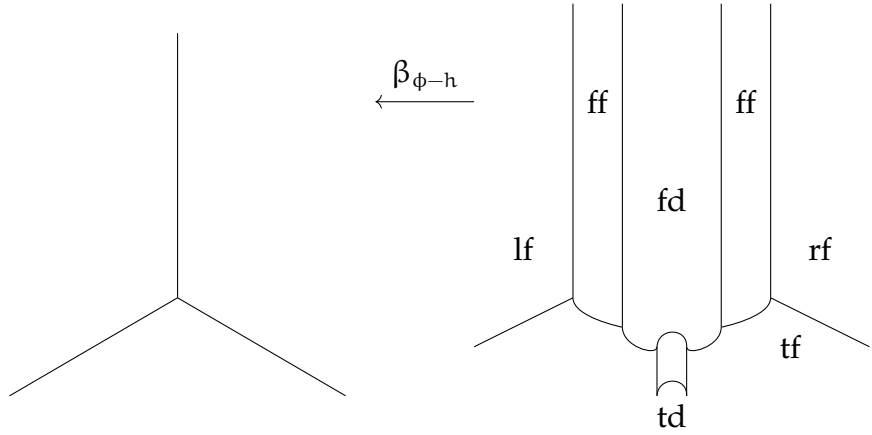
\begin{figure}[h]
\centering

\begin{tikzpicture}[scale = 1.3]
  \draw[-] (0,-1)--(0,1.7);
  \draw[-] (0,-1)--(-1.7,-2);
  \draw[-] (0,-1)--(1.7,-2);
          
    \draw[<-] (2,1) -- node[above] {$\beta_{\phi-h}$} (3,1);

   \begin{scope}[shift={(5,0)}]
     \draw[-](1,-1) --(2,-1.5);
     \draw[-](-1,-1)--(-2,-1.5);
      \draw[-](1,-1) --(1,2);
      \draw[-](-1,-1)--(-1,2);
      \draw[-](-0.5,-1.3)--(-0.5,2);
      \draw[-](0.5,-1.3)--(0.5,2);
      
          \node at (0,0) {$\text{fd}$};
         \node at (0.75,1) {$\text{ff}$};
         \node at (-0.75,1) {$\text{ff}$};
           \node at (-1.5,-0.5) {$\text{lf}$};
          \node at (1.5,-0.5) {$\text{rf}$};
           \node at (1.2,-1.5) {$\text{tf}$};
           
         \draw (1,-1).. controls (1,-1.2) and (0.5,-1.3)..(0.5,-1.3);
  
            \draw (-1,-1).. controls (-1,-1.2) and (-0.5,-1.3)..(-0.5,-1.3);        
           
       \draw[-] (0.15,-1.5)--(0.15,-2);
        \draw[-](-0.15,-1.5)--(-0.15,-2);
 \draw (-0.15,-1.5).. controls (-0.14,-1.3) and (0.16,-1.3)..(0.15,-1.5);
 \draw (-0.15,-2).. controls (-0.14,-1.8) and (0.16,-1.8)..(0.15,-2);

           \draw (0.15,-1.5)..controls (0.15,-1.6) and (0.5,-1.5)..(0.5,-1.3);
            \draw (-0.15,-1.5)..controls (-0.15,-1.6) and (-0.5,-1.5)..(-0.5,-1.3);

          \node at (0,-2.2) {$\text{td}$};
\end{scope}

\end{tikzpicture}

\caption{The heat blowup space $HM_{\phi}$.}
\label{heat-space}
\end{figure}

\begin{Th}\cite{heat short phi}(Theorem 7.2) \label{short time}
With the same assumptions as in \cite{heat short phi}, the fundamental solution of the heat equation, for finite time $t <\infty,$
\begin{align*}
&\partial_{t}H(t,x,x') + \Delta_{g_\phi,x}^{q}H(t,x,x') = 0,\\
& H(t = 0,x,x') = \delta(x-x'),
\end{align*}
lifts to polyhomogeneous conormal distribution on 
$HM_{\phi}$ with leading asymptotics $0$ at 
$fd$ and $-n$ at td and vanishing to infinite order
 on other  hypersurfaces of $HM_{\phi}$. Here  $n  = \text{dim}\,\overline{M}$.
\end{Th}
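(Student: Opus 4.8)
The plan is to realize the heat kernel as an element of a calculus of polyhomogeneous conormal distributions on $HM_\phi$, built by a Melrose-type successive-approximation argument, and then to identify the resulting parametrix with the genuine fundamental solution by uniqueness. First I would make the blow-up construction precise. Starting from $\overline{M}^2_\phi \times \mathbb{R}^+_t$ — the $\phi$-double space (obtained from $\overline{M}^2$ by the two successive blow-ups of the $\phi$-calculus, producing the front face $ff$) crossed with the time half-line — one parabolically blows up the lifted diagonal $\{x=x',\,y=y',\,z=z'\}$ at $t=0$, which produces the temporal front face $td$; the hypersurfaces $fd$ (the $\phi$-front face carried across time), $tf$ (the lift of $\{t=0\}$ away from the diagonal), and the left/right faces $lf,rf$ are inherited. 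In parabolic projective coordinates near $td$, writing $\tau=\sqrt t$ and rescaled normal variables, the operator $t(\partial_t+\Delta_{g_\phi})$ lifts to an operator whose restriction to $td$ is the Euclidean model heat operator; near $fd$ the $\phi$-normal operator of $\Delta_{g_\phi}$ enters, so the restriction of the suitably rescaled heat operator to $fd$ is $\partial_t + N_\phi(\Delta_{g_\phi})$, a heat problem for the normal operator on the model space $\mathbb{R}_+\times\mathbb{R}^b_Y\times F$.

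Next I would solve these two model problems and assemble an initial parametrix $H_0$ supported near $td\cup fd$. At $td$ one takes a cutoff of the Euclidean heat kernel $(4\pi s)^{-n/2}e^{-|\xi|^2/4s}$; accounting for the half-density/Jacobian normalization on $HM_\phi$, this is polyhomogeneous with leading order $-n$ at $td$ and $0$ at $fd$, and vanishes to infinite order at $tf,lf,rf,ff$. Solving the normal heat equation at $fd$ — by functional calculus applied to $N_\phi(\Delta_{g_\phi})$, or by its spectral resolution — produces the leading term at $fd$ of order $0$, compatible with the $td$-model along the corner $td\cap fd$ by construction. One then computes the error $E_0 := (\partial_t+\Delta_{g_\phi})H_0$: it is supported near the same faces, polyhomogeneous, vanishes to one extra order at $td$, and vanishes to infinite order at $tf$ so that $H_0$ still represents $\delta$ at $t=0$.

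Then I would iterate. The essential analytic input is that the space of polyhomogeneous conormal distributions on $HM_\phi$ is closed under the composition induced by Duhamel's principle — a pushforward statement for the appropriate b-fibration of the associated triple space onto $HM_\phi$ in the sense of Melrose — which is where the index sets combine and where one checks that the side faces $lf,rf,ff$ contribute nothing. Granting this, the Neumann series $\sum_{k\ge 0}(-1)^k E_0^{*k}$ (convolution in the time variable) converges asymptotically and corrects $H_0$ to a kernel whose error vanishes to infinite order at every hypersurface; a final Duhamel correction against the exact heat semigroup removes the rapidly vanishing smooth remainder and yields the true heat kernel. Tracking orders through the construction gives leading order $0$ at $fd$, $-n$ at $td$, and rapid vanishing at $tf,lf,rf,ff$; uniqueness of the fundamental solution of the heat equation then identifies this kernel with $H(t,x,x')$.

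The main obstacle is the composition/pushforward theorem on the manifold with corners $HM_\phi$, i.e. proving that the heat $\phi$-calculus is closed under the Duhamel composition and correctly bookkeeping the index sets, together with the invertibility of the normal heat operator at $fd$, which is what makes the parametrix step close. These are precisely the technical ingredients established in \cite{heat short phi}, from which the present statement is quoted; once they are available, the order computations at $fd$ and $td$ are routine.
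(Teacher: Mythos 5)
The paper itself contains no proof of this statement: it is quoted directly from \cite{heat short phi} (Theorem 7.2), and the only internal content here is the description of $HM_{\phi}$ as the $\phi$-double space crossed with time, with the diagonal parabolically blown up at $t=0$ --- which your construction reproduces correctly (your face fd is the lifted fibre-diagonal/$\phi$-front face, td the new parabolic face, and the orders $0$ at fd, $-n$ at td with $\tau=\sqrt{t}$ come out as you say). Your outline --- model problems at td and at fd, Duhamel/Neumann iteration closed up by a pushforward--composition theorem on the associated triple space, then identification with the true heat kernel by uniqueness --- is essentially the approach of the cited reference, and you rightly acknowledge that the substantive ingredients (the composition theorem for the heat $\phi$-calculus and solvability of the normal problem at fd) are established there rather than anywhere in the present paper.
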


Consequently, Theorem \ref{short time} completely 
determines behaviour of heat kernel in finite time regime.

\subsection*{Long time regime}

The relation between heat kernel and resolvent
can be read from residue Theorem \ref{res}.
Namely, we may
express heat kernel as,
\begin{equation} \label{func res heat}
H^{M}_{\phi,q}(t,z,z') = \frac{1}{2\pi i}
\int_{\Gamma'} e^{-t\lambda}
(\Delta_{\phi}^{q} - \lambda)^{-1}d\lambda,
\end{equation}
Where $\Gamma$ is the curve around the spectrum oriented counter-clockwise and
 with $(\Delta_{\phi} - \lambda)^{-1}$ 
 we understand the integral kernel of resolvent
  $(\Delta_{\phi} - \lambda)^{-1}$.
Note that $\Delta_{\phi}$ admits positive 
continuous real spectrum and consequently 
one  visualize $\Gamma'$ as,
 figure \ref{counter1}.
\begin{figure}[h]
\centering

\begin{tikzpicture}[scale = 1]
  \draw[<-]  (0.5,0.5)--(2,2);
  \draw[->]  (0.5,-0.5)--(2,-2);
  \draw[dashed] (0,0) --(3,0);
  \node at (0.2,0.2) {$(0,0)$};
 \draw[thick,black] ([shift=(43:0.7cm)]0,0) arc (43:318:0.7cm);
  \end{tikzpicture}

\caption{$\Gamma'$}
\label{counter1}
\end{figure}
Or with the change of variable $\lambda = - \lambda$ in (\ref{func res heat}) one obtains,
\begin{equation} \label{func res heat 2}
H^{M}_{\phi,q}(t,z,z') = \frac{1}{2\pi i}
\int_{\Gamma} e^{t\lambda}
(\Delta_{\phi}^{q} +\lambda)^{-1}d\lambda,
\end{equation}
But now $ \Gamma$ is visualized as figure \ref{counter}.

\begin{figure}[h]
\centering

\begin{tikzpicture}[scale = 1]
  \draw[<-]  (-0.5,0.5)--(-2,2);
  \draw[->]  (-0.5,-0.5)--(-2,-2);
  \draw[dashed] (0,0) --(-3,0);
  \node at (0,0.2) {$(0,0)$};
 \draw[thick,black] ([shift=(134:0.7cm)]0,0) arc (134:-134:0.7cm);
  \end{tikzpicture}

\caption{$\Gamma$}
\label{counter}
\end{figure}

In order to explain the behaviour of heat kernel as $ t \longrightarrow \infty$ one applies (\ref{func res heat 2}).\\
In \cite{reslaw gtv} authors constructed the polyhomogeneous
integral kernel of resolvent of $\Delta_{\phi}$ on
 the space $M^{2}_{\lambda,\phi}$ at low energy level i.e as
 $\lambda \longrightarrow 0^{+}$.
The space $M^{2}_{\lambda,\phi}$ is manifold with
corners and may be illustrated as in figure 
\ref{blow up space of resolvent}.

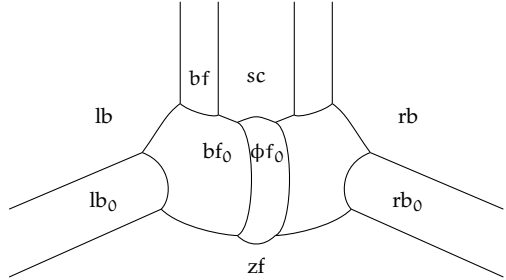
\begin{figure}[h]
\centering
\begin{tikzpicture}[scale =0.5]

\draw[-] (-2,2.3)--(-2,5);
\draw[-] (-1,2)--(-1,5);
\draw[-] (1,2)--(1,5);
\draw[-] (2,2.3)--(2,5);
\draw[-] (-3,1)--(-6.5,-0.5);
\draw[-] (3,1)--(6.5,-0.5);
\draw[-] (-2.5,-0.5)--(-6.5,-2.3);
\draw[-] (2.5,-0.5)--(6.5,-2.3);

\draw (-1,2)--(-0.5,1.8);
\draw (1,2)--(0.5,1.8);
\draw (-0.5,1.8).. controls (0,2) and (0,2) .. (0.5,1.8);

\draw (-1,2).. controls (-1.1,1.9) and (-1.8,2.1) .. (-2,2.3);
\draw (1,2).. controls (1.1,1.9) and (1.8,2.1) .. (2,2.3);

\draw (-2.5,-0.5).. controls (-2,-1) and (-0.8,-1.2) .. (-0.5,-1.2);
\draw (2.5,-0.5).. controls (2,-1) and (0.8,-1.2) .. (0.5,-1.2);
\draw (-0.5,-1.2).. controls (-0.2,-1.5) and (0.2,-1.5) .. (0.5,-1.2);

\draw (-3,1).. controls (-2.2,0.7) and (-2.2,-0.2) .. (-2.5,-0.5);
\draw (3,1).. controls (2.2,0.7) and (2.2,-0.2) .. (2.5,-0.5);
\draw (-2,2.3).. controls (-2.4,2) and (-2.5,1.7) .. (-3,1);
\draw (2,2.3).. controls (2.4,2) and (2.5,1.7) .. (3,1);

\draw (-0.5,-1.2).. controls (0,-1) and (0,1.6) .. (-0.5,1.8);
\draw (0.5,-1.2).. controls (1,-1) and (1,1.6) .. (0.5,1.8);

\node at (-4,-0.3) {\tiny{$\text{lb}_{0}$}};
\node at (4,-0.3) {\tiny{$\text{rb}_{0}$}};
\node at (-4,2) {\tiny{$\text{lb}$}};
\node at (4,2) {\tiny{$\text{rb}$}};
\node at (0,-2) {\tiny{\text{zf}}};
\node at (-1,1) {\tiny{$\text{bf}_{0}$}};
\node at (0.3,1) {\tiny{$\phi f_0$}};
\node at (0,3) {\tiny{$sc$}};
\node at (-1.5,3) {\tiny{$bf$}};

\end{tikzpicture}
 \caption{Blowup space $M^{2}_{\lambda , \phi}$}
  \label{blow up space of resolvent}
\end{figure}

The main result of \cite{reslaw gtv} , expresses low energy of 
resolvent of $\phi$ Hodge Laplacian in terms of
polyhomogeneous conormal distribution on $M^{2}_{\lambda, \phi}$. 
The result reads as follows,

\begin{Th}\cite{reslaw gtv} (Theorem 7.11) \label{loweneregy of resolvent}
Under the same assumptions as in \cite{reslaw gtv},
The resolvent $(\Delta_{\phi} + k^{2})^{-1}$ as $k\longrightarrow 0^+$ is an element of
the split calculus  (defined as in \cite{reslaw gtv}) where,
\begin{equation*}
\mathcal{E}_{sc} \geq 0, \, \mathcal{E}_{\phi f_0} \geq 0,\, \mathcal{E}_{bf_{0}} \geq -2, \, \mathcal{E}_{lb_0} , \mathcal{E}_{rb_0} > 0, \mathcal{E}_{zf} \geq -2.
\end{equation*}
The leading terms at $sc$, $\phi f_0$, $bf_0$ and $zf$ are 
of order $0,0,-2,-2$.
\end{Th}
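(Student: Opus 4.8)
This is Theorem 7.11 of \cite{reslaw gtv}, so I only outline the strategy. The plan is to construct $(\Delta_{\phi} + k^{2})^{-1}$ for $k \to 0^{+}$ as a polyhomogeneous conormal operator on the resolution space $M^{2}_{\lambda,\phi}$ of Figure \ref{blow up space of resolvent} by a parametrix construction, and then to promote the parametrix to the true resolvent by a Fredholm argument, using that $-k^{2}$ lies outside the spectrum of the nonnegative self-adjoint operator $\Delta_{\phi}$ for every $k > 0$.

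First I would fix the resolution. Starting from $\overline{M}^{2} \times [0,\epsilon)_{k}$ one performs the sequence of blow-ups producing the boundary hypersurfaces $sc$, $\phi f_{0}$, $bf$, $bf_{0}$, $zf$, $lb$, $rb$, $lb_{0}$, $rb_{0}$, with the diagonal lifted to a $p$-submanifold meeting only $sc$ and $\phi f_{0}$, where the order $-2$ conormal singularity of a $\phi$-pseudodifferential operator lives. To each boundary hypersurface one attaches a \emph{normal operator}, obtained by freezing coefficients and rescaling: near $sc$ a scattering/Euclidean model, near $\phi f_{0}$ a Bessel-type model on a fibre crossed with a half-line, near $bf$ the $\phi$ normal operator carrying the fibrewise Laplacian $\Delta_{F}$, and near $bf_{0}$ and $zf$ a $b$- or cusp-type model on the base $B$ at zero energy. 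The differential order $2$ of $\Delta_{\phi} + k^{2}$ together with the low-energy divergence of the resolvent at the bottom of the continuous spectrum forces leading order $-2$ at $bf_{0}$ and $zf$, which is the arithmetic behind $\mathcal{E}_{bf_{0}} \geq -2$ and $\mathcal{E}_{zf} \geq -2$.

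Next I would build the parametrix inductively. One starts with a small-calculus parametrix $G_{0}$ inverting the principal symbol of $\Delta_{\phi} + k^{2}$ along the lifted diagonal, so that $(\Delta_{\phi} + k^{2})G_{0} = \operatorname{Id} - R_{0}$ with $R_{0}$ smoothing and supported off the diagonal, and then removes the Taylor expansion of the error at the boundary hypersurfaces one face at a time, in an order compatible with the partial order on the faces. At each face this amounts to solving the corresponding normal-operator equation, which requires invertibility of that normal operator on the appropriate weighted space: explicitly for $sc$ and $\phi f_{0}$; invertibility of $\Delta_{F}$ on $(\ker \Delta_{F})^{\perp}$ at $bf$, with the fibre-harmonic part handed off to the base model (this is the split structure of the calculus); and invertibility of the zero-energy $b$/cusp model on $B$, up to its finite-dimensional indicial obstruction, at $bf_{0}$ and $zf$. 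Propagating the index sets through these solution steps by Melrose's pullback and pushforward theorems for polyhomogeneous distributions produces exactly the index family of the statement, including the strict positivity $\mathcal{E}_{lb_{0}}, \mathcal{E}_{rb_{0}} > 0$ --- which says that no harmonic form survives at the side faces under the hypotheses of \cite{reslaw gtv} --- and the leading orders $0,0,-2,-2$ at $sc, \phi f_{0}, bf_{0}, zf$. After finitely many steps the remainder $R_{N}$ vanishes to high order at every boundary face, hence is Hilbert--Schmidt, indeed trace class, on the relevant weighted $L^{2}$ space; since the true resolvent exists for $k > 0$, $\operatorname{Id} - R_{N}$ is invertible with inverse $\operatorname{Id} + S$ for some $S$ in the calculus, and $(\Delta_{\phi} + k^{2})^{-1} = G_{N}(\operatorname{Id} + S)$ then carries the asserted polyhomogeneous structure.

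The main obstacle is the analysis at the zero-energy faces $zf$ and $bf_{0}$: the model operator there is the $b$- or cusp-type Laplacian on $B$ at $\lambda = 0$, whose inverse is defined only modulo its indicial roots and whose low-energy behaviour is governed by the $L^{2}$-harmonic forms and possible zero-resonances of $\Delta_{\phi}$. Proving invertibility of this model on the correct weighted space, pinning down the precise leading coefficient (order $-2$) and excluding logarithmic terms beyond those already encoded in the index family is where the geometric assumptions of \cite{reslaw gtv} on $(B,g_{B})$ and on the fibre form $g_{F}$ enter in an essential way, and it is this step that dictates all the numerics in the statement.
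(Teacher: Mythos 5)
The paper offers no proof of this statement: it is quoted verbatim from Theorem 7.11 of \cite{reslaw gtv} (with the remark that the parallel work \cite{kottke-rochon} obtains it independently), so there is nothing internal to compare your outline against. Your sketch --- parametrix construction in the split calculus on $M^{2}_{\lambda,\phi}$, inversion of the normal operators face by face with the fibre-harmonic part handed to the base model, index sets propagated by pullback/pushforward, and a Neumann-series correction to the true resolvent --- is the strategy actually used in the cited reference; the only imprecision is that the $zf$ model there lives on the whole manifold $\overline{M}$ at zero energy (with the cone over $B$ appearing at $bf_{0}$), not on the base $B$ as you state.
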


 \begin{Rem}
 Theorem $\ref{loweneregy of resolvent}$ is obtained in parallel work \cite{kottke-rochon}.
 \end{Rem}

We apply (\ref{func res heat 2}) and determine the 
asymptotics of heat kernel in large time regime 
by integration over path $\Gamma$.
One parametrize $\Gamma$ as,\\
 For $\frac{\pi}{2}  < \varphi < \pi$
 and $ a\in \mathbb{R}^{+}$ fixed,
 $\Gamma$ splits in two paths $\Gamma_{1,a}$ 
 and $ \Gamma_{2,a}$
 where $\Gamma_{1,a}$ and $ \Gamma_{2,a}$
 are parametrised as,
  \begin{align} \label{gamma}
\Gamma_{1,a} &= ae^{i\theta}, \qquad  \nonumber
-\varphi \leq \theta \leq \varphi,\\ 
\Gamma_{2,a} &= re^{i\theta}, 
\qquad a \leq r < \infty.
\end{align}
Assume that $\mathcal{R}(\lambda, z, z')$ 
is the Schwartz kernel of resolvent $
(\Delta_{\phi} + e^{i\theta}\lambda)^{-1}, \,
\lambda > 0.$
Recall that in short time regime we used 
$\tau := t^{\frac{1}{2}}$. Consequently fix $ \omega = \tau^{-1}$
 and $ a = \omega^{2}$. $\omega \longrightarrow 0$ corresponds to $ t \longrightarrow \infty$ and
the resolvent at low energy i.e as $a \longrightarrow 0^+$.
We may split (\ref{func res heat 2}) into two parts
and write,\,(by suppressing $x, y, x',y')$),
\begin{align*}
H(\omega^{-2}, z , z') &=
 \frac{1}{2\pi i}\int_{\Gamma} e^{\frac{\lambda}
 {\omega^{2}}}\mathcal{R}
(\lambda, z ,z')d\lambda\\
& =  \frac{1}{2\pi i} \left( \int_{\Gamma_{1, \omega^{2}}}
 e^{\frac{\lambda}{\omega^{2}}}\mathcal{R}
 (\lambda, z, z')d\lambda + 
 \int_{\Gamma_{2,\omega^{2}}} e^{\frac{\lambda}
 {\omega^{2}}}
 \mathcal{R}(\lambda, z , z')d\lambda\right).
\end{align*}
\subsection*{Integration on $\Gamma_{1,\omega^{2}}$}
One may use, $
\lambda = \omega^{2}e^{i\theta}, 
\, -\varphi \leq \theta  \leq \varphi,$
and therefore,
\begin{equation} \label{path 1}
\frac{1}{2\pi i} \int_{\Gamma_{1,\omega^{2}}}
e^{\frac{\lambda}{\omega^{2}}}\mathcal{R}
(\lambda, z, z')d\lambda = 
 \frac{\omega^{2}}{2\pi}\int_{-\varphi}^{\varphi}
  e^{e^{i\theta}}\mathcal{R}
  (\theta, \omega, z , z')d\theta.
\end{equation}
Firstly we observed that the integral converges by the boundedness 
of integrand with respect to $\theta$.
By Theorem \ref{loweneregy of resolvent} for each  fixed 
$\theta$, the integrand in (\ref{path 1}) is polyhomogeneous 
 on
$M^{2}_{\omega,\phi}$ with conormal singularity 
at diagonal $\triangle_{\phi}$ and all
coefficients depend smoothly on 
$\theta \in [-\varphi, \varphi]$.
Consequently the integral (\ref{path 1}) is 
polyhomogeneous on $M^{2}_{\omega,\phi}$.\\
The index set of
$H(\omega^{-2}, z, z')$ is computed from index sets  
 $\mathcal{R}(\theta, \omega, z, z')$
and from $\omega^{2}$, i.e as $\omega$ is boundary defining function for faces $zf$, $bf_{0}$, $\phi f_0$, $lb_0$, $rb_0$,
 on the faces $\text{zf}$, 
$\text{bf}_{0}$, $\phi f_0$, $\text{lb}_{0}$, $\text{rb}_{0}$
we add $+2$ to the index sets of the resolvent Theorem
 \ref{loweneregy of resolvent}
and the index sets on other faces remain
 the same as those index sets of 
$\mathcal{R}(\theta, \omega, z, z')$.

\subsection*{Integration on $\Gamma_{2, \omega^{2}}$}

The path $\Gamma_{2,\omega^{2}}$ consists of 
two rays at  angels
 $\varphi$ and   $-\varphi$. The integration over $\Gamma_{2,\omega^{2}}$ 
 refers therefore
to the integration over these two rays.
We claim for convergence and polyhomogeneity
 of  integration along $\varphi$
ray. The argument for integration
at $-\varphi$ ray is the same.
Parametrising $\lambda = re^{i\varphi}$,
we express the integral over $\Gamma_{2,\omega^{2}}$ to be,

\begin{equation} \label{eq:1}
\int_{\Gamma_{2,\omega^{2}}}
e^{\frac{\lambda}{\omega^{2}}}\mathcal{R}
(\lambda,z,z')d\lambda =
\int_{\omega^{2}}^{\infty}
e^{\frac{r}{\omega^{2}}e^{i\varphi}}\mathcal{R}
(r, z, z^{'})dr ,
\end{equation}
And use change of variable $s = \sqrt r , dr = 2sds$
in (\ref{eq:1}) to write,

\begin{equation} \label{eq:2}
\int_{\Gamma_{2,\omega^{2}}} e^{\frac{\lambda}
{\omega^{2}}}\mathcal{R}(\lambda, z,z')d\lambda =
\int_{\omega}^{\infty} 2s
e^{(\cos\varphi)\frac{s^{2}}
{\omega^{2}}}e^{i(\sin \varphi)
\frac{s^{2}}{\omega^{2}}}\mathcal{R}
(s^{2}, z,z')ds.
\end{equation}
As $\frac{\pi}{2} < \varphi < \pi$ is fixed,
$cos(\varphi) < 0$ and therefore for
$ s\longrightarrow +\infty$ the $e^{(\cos\varphi)\frac{s^{2}}
{\omega^{2}}}$ and consequently  entire integrand decays 
to infinite order at $ s = \infty, $ 
which mean that the integral $(\ref{eq:2})$ converges.
In order to analyse the polyhomogeneity of
(\ref{eq:2}), we split
$\mathcal{R}(s^{2},z,z')$ into two pieces.
Namely we use
partition of unity and write, $\mathcal{R} = \mathcal{R}_{D} + \mathcal{R}_{C},$
where $\mathcal{R}_{D}$ is supported in a neighborhood of
$\triangle_{\phi}$ and $\mathcal{R}_{C}$ is supported away 
from $\triangle_{\phi}$.
Recall that the diagonal $\triangle_ {\phi}$
 intersects the $sc$, $\phi f_0$ and $zf$ faces of $M^{2}_{s,\phi}$.  Accordingly we may split $\mathcal{R}_{D}$ into three pieces
 and write, $ \mathcal{R}_{D} = \mathcal{R}_{1} +
 \mathcal{R}_{2} + \mathcal{R}_{3}
 $. $\mathcal{R}_{1}$ is supported away from $sc$ and near $zf$ and $\mathcal{R}_{2}$ is supported away from $zf$ and near $sc$ and
$\mathcal{R}_{3}$ is supported in interior of
 $bf_{0}$ near diagonal of $\phi f_0$.
We argue for the polyhomogeneity of integral (\ref{eq:2})
 for each of $\mathcal{R}_{i}, i = 1,2,3.$
  Compare to figure \ref{diagonal pic}.

 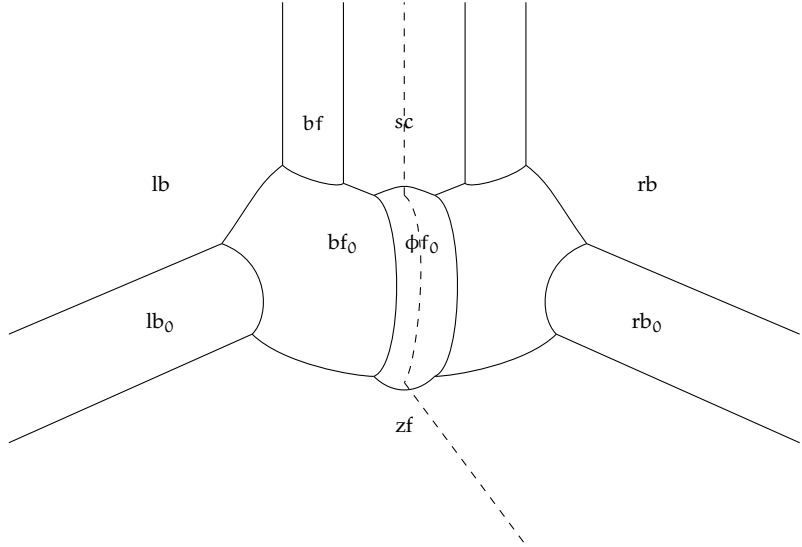
\begin{figure}[h]
\centering
\begin{tikzpicture}[scale =0.8]

\draw[-] (-2,2.3)--(-2,5);
\draw[-] (-1,2)--(-1,5);
\draw[-] (1,2)--(1,5);
\draw[-] (2,2.3)--(2,5);
\draw[-] (-3,1)--(-6.5,-0.5);
\draw[-] (3,1)--(6.5,-0.5);
\draw[-] (-2.5,-0.5)--(-6.5,-2.3);
\draw[-] (2.5,-0.5)--(6.5,-2.3);

\draw (-1,2)--(-0.5,1.8);
\draw (1,2)--(0.5,1.8);
\draw (-0.5,1.8).. controls (0,2) and (0,2) .. (0.5,1.8);

\draw (-1,2).. controls (-1.1,1.9) and (-1.8,2.1) .. (-2,2.3);
\draw (1,2).. controls (1.1,1.9) and (1.8,2.1) .. (2,2.3);

\draw (-2.5,-0.5).. controls (-2,-1) and (-0.8,-1.2) .. (-0.5,-1.2);
\draw (2.5,-0.5).. controls (2,-1) and (0.8,-1.2) .. (0.5,-1.2);
\draw (-0.5,-1.2).. controls (-0.2,-1.5) and (0.2,-1.5) .. (0.5,-1.2);

\draw (-3,1).. controls (-2.2,0.7) and (-2.2,-0.2) .. (-2.5,-0.5);
\draw (3,1).. controls (2.2,0.7) and (2.2,-0.2) .. (2.5,-0.5);
\draw (-2,2.3).. controls (-2.4,2) and (-2.5,1.7) .. (-3,1);
\draw (2,2.3).. controls (2.4,2) and (2.5,1.7) .. (3,1);

\draw (-0.5,-1.2).. controls (0,-1) and (0,1.6) .. (-0.5,1.8);
\draw (0.5,-1.2).. controls (1,-1) and (1,1.6) .. (0.5,1.8);

\node at (-4,-0.3) {\tiny{$\text{lb}_{0}$}};
\node at (4,-0.3) {\tiny{$\text{rb}_{0}$}};
\node at (-4,2) {\tiny{$\text{lb}$}};
\node at (4,2) {\tiny{$\text{rb}$}};
\node at (0,-2) {\tiny{\text{zf}}};
\node at (-1,1) {\tiny{$\text{bf}_{0}$}};
\node at (0.3,1) {\tiny{$\phi f_0$}};
\node at (0,3) {\tiny{$sc$}};
\node at (-1.5,3) {\tiny{$bf$}};
\draw[dashed] (0,1.8)--(0,5);
\draw[dashed] (0,-1.3)--(2,-4);
\draw[dashed] (0,1.8).. controls (0.5,1.4) and (0.2,-1.1)..(0,-1.3);

\end{tikzpicture}
 \caption{Diagonal of $M^{2}_{s,\phi}$}
 \label{diagonal pic}
  
\end{figure}

\subsubsection*{ Polyhomogeneity of $\mathcal{R}_{1}$}
On the support of $\mathcal{R}_{1}$ near $zf$,
 away from $x \neq 0$,
we may use projective coordinates, $
\hat{X} = \frac{x - x'}{x}, 
\hat{Y} = \frac{y - y'}{x},\hat{Z} = \frac{z-z'}{x},
 \mu = \frac{s}{x}.$
By definition of conormal singularity along 
diagonal $\triangle_{\phi}$, we may express,
\begin{equation} \label{eq:4}
\mathcal{R}_{1} \sim \int_{\mathbb{R}^{n}}
 e^{i(\hat{X},\hat{Y},\hat{Z})(\zeta_{1},\zeta_{2},\zeta_{3})}
  \sum_{j = 0}^{\infty} a_{j}
(\frac{s}{x},x,y,z,\frac{\zeta}
{\vert \zeta \vert}) \vert \zeta \vert^{2-j} d\zeta,
\end{equation}
where by $ \sim$ we mean that $\mathcal{R}_{1}$ may be expressed
locally by (\ref{eq:4})  on $M^{2}_{s,\phi}$ up to smooth function.
Thus we may absorb the smooth reminder into $\mathcal{R}_{C}$
and plug (\ref{eq:4}) into (\ref{eq:2}) to obtain,
 \begin{equation*}
\int_{\mathbb{R}^{n}} e^{i(\hat{X},\hat{Y},\hat{Z})
(\zeta_{1}, \zeta_{2}, \zeta_{3})} 
\sum_{j = 0}^{\infty}(\int_{\omega}^
{\infty} 2se^{(\frac{s}{\omega})^{2}}e^{i\varphi}
 a_{j}(\frac{s}{x}, x, y, z , \frac{\zeta}
 {\vert \zeta \vert})\vert \zeta \vert^{2-j}d\zeta ds.
 \end{equation*}
From definition of conormal singularity, one has that
the coefficients $a_{j}$ are polyhomogeneous in $x$
and $\frac{s}{x}$ with index set independent 
from $j$ and $a_{j}$ is smooth in  
$(y, z, \frac{\zeta}{\vert \zeta \vert})$. 
Observe that the pullback of $a_{j}$ via projection to, $
M^{3}_{b}(s, x, \omega) \times N_{y} \times N_{f}
\times \mathbb{S}^{n-1}_{\frac{\zeta}{\vert \zeta \vert}},$
is polyhomogeneous conormal with index set
independent of $j$. Therefore, $
2se^{(\frac{s}{\omega})^{2}}e^{i\varphi}a_{j}
(\frac{s}{x},x,y,z, \frac{\zeta}{\vert \zeta \vert}) $
is polyhomogeneous conormal on
$ M^{3}_{b}(s, \omega, x) \times N_{y} \times N_{f} \times \mathbb{S}^{n-1}_{\frac{\zeta}{\vert \zeta\vert}}$
and it admits cut off singularity at $\frac{s}{\omega} = 1$.
The integrand has infinite decay in $s$ due to the fact
that $\cos \varphi < 0$ and therefore integration
is well defined. Integration in $s$ corresponds
 to a projection map,
\begin{equation*}
M^{3}_{b}(s,\omega,x) \longrightarrow M^{2}_{b}(\omega,x),
\end{equation*}
which is b-fibration \cite{Mel-diff} by the following lemma from
 \cite{sc conf}.

\begin{Lemma}
If $m < n$ each of the projections off $n-m$ factors of $X$, $
\pi: X^{n} \longrightarrow X^{m},$
fixes a unique b-stretched projection $\pi_{b}$
 giving a commutative diagram,

\begin{equation}
\begin{tikzcd}
X^{n}_{b}\arrow[r]\arrow[d]& X^{m}_{b}\arrow[d] \\X^{n}
\arrow[r]& X^{m}
\end{tikzcd}
\end{equation}
and furthermore $\pi_{b}$ is b-fibration.
\end{Lemma}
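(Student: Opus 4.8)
The plan is to treat the two ingredients in the definition of a b-fibration separately, after two standard reductions.

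\emph{Reduction and recollection.} First I would reduce to the case $n=m+1$, i.e.\ to the projection forgetting a single factor: a composition of b-fibrations is a b-fibration, and the b-stretched product $X^{n}_{b}$ is built by iterated blow-up in such a way that it is canonically diffeomorphic to the b-stretched product of $X^{n-1}_{b}$ with one further copy of $X$, blowing up the boundary faces of the new factor against the corners already present in order of increasing codimension. With this reduction I would recall the construction of $X^{n}_{b}$ as the iterated blow-up of the boundary corners of $X^{n}$ (the ``total boundary blow-up''), ordered by increasing codimension, together with the fact that the result is independent of the admissible ordering and that Melrose's commutation and push-forward theorems for blow-ups \cite{Mel-diff} apply.

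\emph{Existence and uniqueness of the lift.} To produce $\pi_{b}\colon X^{n}_{b}\to X^{m}_{b}$ I would use the universal property of the blow-down map: a smooth map into $[Y;Z]$ is the same as a smooth map $f$ into $Y$ whose ``b-preimage'' $f^{\ast}Z$ is a p-submanifold already obtained by blow-ups performed upstairs. Each center blown up in forming $X^{m}_{b}$ is a corner $\bigcap_{i}H_{i}$ lying in the $m$ retained factors; its preimage under $\pi$ is the corresponding corner in $X^{n}$, which is itself one of the centers blown up in $X^{n}_{b}$ (or is already resolved there). Hence $\pi\circ\beta_{X^{n}_{b}}$ lifts successively through the chain of blow-ups defining $X^{m}_{b}$, and the composite of these lifts is the desired $\pi_{b}$, which by construction satisfies $\beta_{X^{m}_{b}}\circ\pi_{b}=\pi\circ\beta_{X^{n}_{b}}$; this is exactly the commutative square. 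Uniqueness is automatic, since $X^{n}_{b}$ and $X^{m}_{b}$ have dense interiors on which any such lift must agree with $\pi$.

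\emph{The b-fibration property.} It then remains to verify that $\pi_{b}$ is an interior b-map which is b-normal and b-submersive. Working in the projective (or polar) coordinates introduced by the blow-ups, one sees at once that $\pi_{b}$ is an interior b-map: each boundary defining function of $X^{m}_{b}$ pulls back to a positive smooth function times a monomial in the boundary defining functions of $X^{n}_{b}$. For b-submersivity I would compute the b-differential in these coordinates; the extra $X$-factor supplies all the b-vector fields tangent to its own boundary faces, and after projecting these out $\,{}^{b}d\pi_{b}$ is surjective on every boundary hypersurface. The step I expect to be the main obstacle is \emph{b-normality}: one must show that no boundary hypersurface of $X^{n}_{b}$ is carried into a corner of $X^{m}_{b}$ — equivalently, that each boundary defining function of $X^{m}_{b}$ pulls back to a positive smooth multiple of a \emph{single} boundary defining function of $X^{n}_{b}$, with no products appearing. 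This is a bookkeeping argument over the iterated blow-up: the boundary hypersurfaces of $X^{m}_{b}$ are of two types, the lifts of the original product faces and the front faces created by blow-up, and for each I would identify the unique hypersurface of $X^{n}_{b}$ it pulls back to, using crucially that forgetting a factor deletes exactly the whole family of faces associated with that factor, so that the potential ``doubling'' of a defining function never materialises. Once this case analysis is complete, together with the b-submersivity computation it shows that $\pi_{b}$ is a b-fibration.
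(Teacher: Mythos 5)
First, note that the paper itself contains no proof of this lemma: it is quoted verbatim from Melrose--Singer \cite{sc conf} (with \cite{Mel-diff} as background), so there is no internal argument to compare yours against; your sketch follows the standard route one would take there, namely the inductive structure of the total boundary blow-up, lifting the projection through the chain of blow-ups, uniqueness by density of the interior, and then a face-by-face verification of the b-map, b-normality and b-submersivity conditions. Two inaccuracies should be flagged, one cosmetic and one substantive. The cosmetic one: the total boundary blow-up defining $X^{n}_{b}$ is performed on the corners in order of \emph{decreasing} codimension (deepest corners first, i.e.\ increasing dimension), not increasing codimension as you state; blowing up codimension-two faces before the codimension-three corners they contain does not produce the b-stretched product.

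The substantive gap is in your formulation of b-normality. You assert it is equivalent to the statement that each boundary defining function of $X^{m}_{b}$ pulls back to a positive multiple of a \emph{single} boundary defining function of $X^{n}_{b}$, ``with no products appearing'', and you propose to verify exactly that. This is the transpose of the correct condition and is in fact false for the map in question: already for $X=[0,1)$, $X^{2}_{b}=[X^{2};\{0\}\times\{0\}]$ and the lifted projection $\pi_{b}\colon X^{2}_{b}\to X$, the defining function $x$ of $\partial X$ pulls back to (a positive multiple of) $\rho_{\mathrm{lb}}\,\rho_{\mathrm{ff}}$, a product of two defining functions upstairs, and yet $\pi_{b}$ is a b-fibration. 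What b-normality requires is that each boundary hypersurface $H$ of the \emph{source} appears with nonzero exponent in the pullback of at most one boundary defining function of the target, equivalently that no hypersurface of $X^{n}_{b}$ is mapped into a corner of $X^{m}_{b}$ (your first phrasing, which is correct). As written, your verification plan targets a condition that fails, so the bookkeeping step would collapse; it needs to be recast as: for every face of $X^{n}_{b}$ (lifts of product faces and front faces of the various blow-ups), identify the at most one boundary hypersurface of $X^{m}_{b}$ containing its image. With that correction, and an honest computation of the b-differential in projective coordinates for the b-submersivity claim (which you only assert), the argument goes through along the lines of \cite{sc conf}.
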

Extension to 
$N_{y} \times N_{z} \times 
\mathbb{S}^{n-1}_{\frac{\zeta}{\vert \zeta \vert}}$ by
$(y, z, \frac{\zeta}{\vert \zeta \vert})$
does not change the b-fibration property of this map
and consequently by pushforward theorem of Melrose
 \cite{MelATP}, $
\int_{\omega}^{\infty}
2se^{(\frac{s}{\omega})^{2}}e^{i\varphi}a_{j}
(\frac{s}{x},x,y,z, \frac{\zeta}{\vert \zeta \vert})ds,$
is polyhomogeneous conormal on,\\ $
 M^{2}_{b}(\omega,x) \times N_{y} \times N_{z}
  \times \mathbb{S}^{n-1}_{\frac{\zeta}{\vert \zeta \vert}},$
with index set independent of $j$ and consequently on
 $M^{2}_{\omega,\phi}.$

 \subsubsection*{Polyhomogeneity of $\mathcal{R}_{2}$}

We need to prove the polyhomogeneity of the integration
 of kernel $\mathcal{R}_{2}$ along diagonal
 $\triangle_{\phi} \cap sc$ of $M^{2}_{s,\phi}$. \\
 $\mathcal{R}_{2}$ is supported near $sc$
 face of $M^{2}_{s,\phi}$. The coordinates we employ
are in the form, $
 X = s(\frac{1}{x} - \frac{1}{x'}), 
 Y = \frac{s}{x}(y - y'), \mu = \frac{x}{s}, s, Z = \frac{s}{x}(z-z').$
The definition of conormality implies that,

\begin{equation} \label{eq:5}
\mathcal{R}_{2} \sim \int_{\mathbb{R}^{n}} 
e^{i(X, Y, Z)(\zeta_{1},\zeta_{2},\zeta_{3})}
\sum_{j = 0}^{\infty} b_{j}(\frac{x}{s},s,y,z,
\frac{\zeta}{\vert \zeta \vert}) \vert \zeta
 \vert^{2-j}d \zeta.
\end{equation}
Where $b_{j}$ are polyhomogeneous conormal in
$\frac{x}{s}$ and $s$ with index sets independent
of $j$ and $b_{j}$ smoothly depend on 
$(y,z, \frac{\zeta}{\vert \zeta \vert})$. 
We plug (\ref{eq:5}) into (\ref{eq:2}) and analyse
the integral
by arguing in two different regimes which correspond
to $\omega > \frac{x}{2}$ and $\omega < \frac{x}{2}.$\\
First we assume that $\omega > \frac{x}{2}$
and we introduce projective coordinates, on sc face of
 $M^{2}_{\omega,\phi}$,
$\bar{X} = \omega(\frac{1}{x} - \frac{1}{x'}),
\bar{Y} = \frac{\omega}{x}(y - y'), \bar{Z} = \frac{\omega}{x}(z - z'),
 \bar{\lambda} = \frac{x}{\omega}.$
We need expression of conormal singularity at,
 $\bar{X} = \bar{Y} = \bar{Z} =0,$
in this region. We note that, $
(X, Y, Z) = (\frac{s}{\omega}\bar{X}, \frac{s}{\omega}\bar{Y},\frac{s}{\omega}\bar{Z})$
and take new variable $\eta$ which is defined as, $
\eta = (\eta_{1}, \eta_{2}, \eta_{3}) = (\frac{s}{\omega}\zeta_{1},\frac{s}{\omega}\zeta_{2}, \frac{s}{\omega}\zeta_{3}).$
We  plug (\ref{eq:5}) in (\ref{eq:2}) and employ new variables
$\bar{X}, \bar{Y}, \bar{Z} $. Interchanging
the integral and the sum to obtain,
\begin{equation*}
\int_{\mathbb{R}^{n}} \int_{\omega}^{\infty}
e^{i(\bar{X}, \bar{Y},\bar{Z})(\eta_{1},\eta_{2},\eta_{3})}
\sum_{j = 0}^{\infty}
2s e^{(\frac{s}{\omega})^{2}}e^{i\varphi} b_{j}
(\frac{x}{s},s,y,z,
\frac{\eta}{\vert \eta \vert})
(\frac{s}{\omega})^{j-2-(n)}\vert \eta \vert^{2-j}
 d s d \eta.
\end{equation*}
We argue that, $
\int_{\omega}^{\infty} 2s e^{(\frac{s}{\omega})^{2}}
e^{i\varphi}b_{j}(\frac{x}{s},s,y,z,\frac{\eta}
{\vert \eta \vert})(\frac{s}{\omega})^{j-2-(n)}ds,$
is polyhomogeneous conormal in 
$\frac{x}{\omega}, \omega$ independent of $j$.
 The argument is similar to the last step as the
integrand decays to infinite order at $ s = \infty$ 
and is polyhomogeneous conormal in $\frac{x}{s}, s$
independent of $j$. Integration with respect to $s$
yields the polyhomogeneity on $X^{2}_{b}(x, \omega)
\times N_{y} \times N_{f} \times \mathbb{S}^{n-1}_
{\frac{\eta}{\vert \eta \vert}}$
by Melrose pushforward Theorem. \\ 
The second region corresponds to $\omega < \frac{x}{2}$.
Here one use coordinates, $
\hat{X} =  \frac{x-x'}{x}, \hat{Y} 
= y - y', \hat{Z} = z-z' , \frac{s}{x}, x,$
and one expects that conormal singularities arise on, $
\hat{X} = \hat{Y} = \hat{Z} = 0.$
One notes that $( X , Y, Z) = (\frac{s}{x})(\hat{X}, \hat{Y},\hat{Z})$. One introduce variables $(\zeta^{'}_{1},\zeta{'}_{2},\zeta^{'}_{3}) = \frac{s}{x}(\zeta_1,\zeta_2,\zeta_3)$ and plug these variables in (\ref{eq:5}) to obtain,
\begin{equation*}
\int_{\mathbb{R}^{n}} e^{i(\hat{X}, \hat{Y}, \hat{Z})
(\zeta'_{1},\zeta'_{2},\zeta'_{3})}
 \sum_{j =0}^{\infty}(\int_{\omega}^{\infty}
  2s e^{(\frac{s}{\omega})^{2}e^{i\varphi}}
  b_{j}(\frac{x}{s},s,y,z,\frac{\zeta'}
  {\vert \zeta' \vert})(\frac{s}{x})^{j-2-n}
  ds\vert \zeta' \vert^{2-j})d\zeta',
 \end{equation*}
The j-coefficient may be written as,

\begin{equation} \label{eq:6}
(\frac{\omega}{x})^{j-2-n}\int_{\omega}^
{\infty}2s e^{(\frac{s}{\omega})^{2}}e^
{i\varphi}b_{j}(\frac{x}{s},s,y,z,
\frac{\zeta'}{\vert \zeta' \vert})(\frac{s}{\omega})
^{j-2-n}ds,
\end{equation}
which is $(\frac{\omega}{x})^{j-2-n}$ times 
(\ref{eq:4}). The 
$(\frac{\omega}{x})^{j-2-n}$ is polyhomogeneous
conormal on 
$M^{2}_{b}(\omega,x)$ and consequently (\ref{eq:6}) is 
polyhomogeneous conormal on $M^{2}_{b}(\omega,x)$ 
for each $j$. As $\omega < \frac{x}{2}$ by increasing j
 the order of polyhomogeneity increase and hence 
 all coefficients are polyhomogeneous conormal
  on $M^{2}_{b}(\omega,x)$ with respect
to the index set of the $j = 0$ coefficient.\\
We conclude that the integration on $\mathcal{R}_{2}$
yields to the polyhomogeneity of (\ref{eq:5}) on
$M^{2}_{\omega,\phi}$.

\subsubsection*{Polyhomogeneity of $\mathcal{R}_{3}$}

The polyhomogeneity  integration 
(\ref{eq:1}) with respect to  $\mathcal{R}_{3}$
follows from the fact that
$\mathcal{R}_{3}$ is supported in
compact region namely on diagonal of $\phi f_0$,
and by conormality (\ref{conormal def})
we may express by adequate local coordinates on $\phi f_0$,

\begin{equation} \label{eq:7}
\mathcal{R}_{3} \sim \int_{\mathbb{R}^{n}}
 e^{i(z-z')(\eta)}\sum_{j=0}^{\infty}
 c_{j}(s,z,\frac{\eta}{\vert \eta \vert})\vert 
 \eta \vert^{2-j}d\eta,
\end{equation}
where $c_{j}$ are polyhomogeneous conormal at
 $s = 0$ and $s = \infty$, with index sets
 independent 
 of $j$ at $s = 0$ and $s = \infty$. 
 We plug (\ref{eq:7}) into
 (\ref{eq:2}) and we get,
 
\begin{equation} \label{eq: 8}
 \int_{\mathbb{R}^{n}}e^{i(z-z')\eta}
 \sum_{j=0}^{\infty}(\int_{\omega}^{\infty}2s e^{(\frac{s}{\omega})^{2}}e^{i\varphi}c_{j}(s,z,\frac{\eta}{\vert \eta \vert})ds\vert \eta \vert^{2-j}d\eta.
\end{equation}
Note that  the j-th coefficient, $
 \int_{\omega}^{\infty}2s e^{(\frac{s}{\omega})^{2}}
 e^{i\varphi}c_{j}(s,z,\frac{\eta}{\vert \eta \vert})ds,$
is polyhomogeneous conormal on $M^{2}_{b}(s,\omega)$
with index sets independent of 
$j, z, \frac{\eta}{\vert \eta \vert}$
with infinite decay at $ s = \infty$. Consequently (\ref{eq: 8})
is polyhomogeneous on $M^{2}_{\omega,\phi}$.

\subsubsection*{Polyhomogeneity of $\mathcal{R}_{\mathcal{C}}$}

Now we argue the polyhomogeneity of $\mathcal{R}_{\mathcal{C}}$
term. Explicitly one may write, by suppressing $(x,y,z)$ in $z$
 and $(x',y',z')$ in $z'$,\\ $
\int_{\omega}^{\infty} 2se^{(\frac{s}{\omega})^{2}}
e^{i\varphi}\mathcal{R}_{s}(s^{2}, z, z')ds.$
By assumption 
$\mathcal{R}_{s}(s^{2},x,y, z,x', y', z')$
is polyhomogeneous conormal on $ M^{2}_{s,\phi}$
and smooth across the diagonal $\triangle_{s,\phi}$.
We apply the following lemma parallel to
 \cite{She-stab} (Lemma 10) to the integral kernel,
\begin{equation} \label{eq:9}
\mathcal{R}_{s}(s^{2},x,y,z, x', y', z').
\end{equation}
to conclude that (\ref{eq:9}) is polyhomogeneous 
on $M^{2}_{\omega,\phi}$.

\begin{Lemma} \label{poly smooth}
Assume $T(k,z,z')$ be a function which is
polyhomogeneous 
conormal on $M^{2}_{k,\phi}$ 
and smooth 
in the interior and decaying to infinite order at lb,
rb, and bf. Then,

\begin{equation} \label{eq:10}
\int_{\omega}^{\infty} 2ke^{(\frac{k}{\omega})^{2}}
e^{i\varphi}T(k,z,z')dk,
\end{equation}
is polyhomogeneous on $M^{2}_{\omega,\phi}$ for
$\omega$ bounded from below.
\end{Lemma}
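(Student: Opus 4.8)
The plan is to realize the integral \eqref{eq:10} as a pushforward along a b-fibration and then invoke Melrose's pushforward theorem. First I would reduce to a local statement: since $T$ vanishes to infinite order at $\mathrm{lb}$, $\mathrm{rb}$, $\mathrm{bf}$ and is smooth in the interior, the only faces of $M^2_{\omega,\phi}$ at which polyhomogeneity must be checked are the low-energy faces $\mathrm{sc}$, $\phi f_0$, $\mathrm{zf}$, $\mathrm{bf}_0$, $\mathrm{lb}_0$, $\mathrm{rb}_0$, and near each of these $T$ has the product polyhomogeneous structure granted by hypothesis. Writing $2k\,dk = 2k^2\,\tfrac{dk}{k}$ exhibits the measure as a smooth positive multiple of $k^2$ times the b-density on $[0,\infty)_k$, so the whole integrand is a polyhomogeneous conormal section of the b-density bundle on the appropriate resolved space, apart from the cutoff at the lower endpoint $k=\omega$ and the exponential weight $e^{(k/\omega)^2 e^{i\varphi}}$, which are dealt with next.

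Second, I would construct the space on which the $k$-integration lives. Starting from $M^2_\phi\times[0,\infty)_k\times[0,\infty)_\omega$ and carrying out the resolution of the $\phi$-structure jointly in $(k,\omega)$, together with the blow-up of $\{k=\omega=0\}$, yields a manifold with corners $\mathcal W$ equipped with two maps $\pi_k:\mathcal W\to M^2_{k,\phi}$ and $\pi_\omega:\mathcal W\to M^2_{\omega,\phi}$, each of which is a b-fibration by the stretched-projection lemma quoted above from \cite{sc conf} and \cite{Mel-diff}. On $\mathcal W$ the ratio $\sigma:=k/\omega$ is a smooth function away from a single boundary hypersurface, the cutoff $\chi_{\{k\ge\omega\}}$ becomes $\chi_{\{\sigma\ge1\}}$, a jump across the interior hypersurface $\{\sigma=1\}$ which is transverse to the fibres of $\pi_\omega$ and hence harmless under pushforward, and the factor $e^{\sigma^2 e^{i\varphi}}$, because $\cos\varphi<0$, is smooth and vanishes to infinite order at every face of $\mathcal W$ along which $\sigma\to\infty$, in particular at the face carrying $k=\infty$ and at the face where $\omega\to0$ faster than $k$. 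This infinite decay is exactly what makes the fibre integral absolutely convergent and what upgrades the negative leading orders of $T$ at $\mathrm{bf}_0$ and $\mathrm{zf}$ (order $-2$) to integrable data.

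Third, I would pull $T$ back by $\pi_k$, multiply by the b-density $2k^2\,\tfrac{dk}{k}$, by the cutoff and by the exponential, and push forward by $\pi_\omega$. Melrose's pushforward theorem then gives that $(\pi_\omega)_*\big(\chi_{\{\sigma\ge1\}}\,e^{\sigma^2 e^{i\varphi}}\,\pi_k^*T\cdot 2k\,dk\big)$, which is precisely \eqref{eq:10}, is polyhomogeneous conormal on $M^2_{\omega,\phi}$, with index sets obtained from those of $T$ by the extended-union operation over the $k$-faces; the extra weight $k^2$ and the Schwartz decay in $\sigma$ supply the positivity hypotheses of the theorem at the hypersurfaces mapped into codimension-one faces. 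For $\omega$ bounded from below one is away from the $\omega=0$ faces entirely, so it suffices that the fibre integral over $k\in[\omega,\infty)$ of a polyhomogeneous density with infinite decay at $k=\infty$ is again polyhomogeneous and inherits the infinite decay of $T$ at $\mathrm{lb}$, $\mathrm{rb}$, $\mathrm{bf}$, uniformly as the remaining parameters degenerate.

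The main obstacle is the construction and verification in the second step: choosing the order of blow-ups so that $\{k=\omega=0\}$, the lifted diagonal, and all of the $\phi$-faces (here, with a genuinely nontrivial fibre $F$, the face $\phi f_0$ must be tracked separately from $\mathrm{bf}_0$) interact cleanly, and then checking that both projections are b-fibrations and that the index-set bookkeeping through the pushforward, especially the logarithmic terms arising when index sets overlap at $\mathrm{bf}_0$ and $\mathrm{zf}$, yields exactly the claimed polyhomogeneity. This is the point at which the present fibred setting genuinely departs from the trivial-fibre case of \cite{She-stab}.
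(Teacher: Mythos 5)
Your route---realizing \eqref{eq:10} as a pushforward under a b-fibration from a joint resolution in $(k,\omega)$ and invoking Melrose's pushforward theorem---is viable, but it is genuinely different from what the paper does for this lemma. The paper's proof is an elementary direct computation: near each corner of $M^{2}_{k,\phi}$ (it treats $\mathrm{bf}\cap\mathrm{bf}_{0}\cap\mathrm{lb}$, $\mathrm{lb}\cap\mathrm{lb}_{0}\cap\mathrm{bf}_{0}$, $\mathrm{sc}\cap\mathrm{bf}_{0}$, with the remaining regions handled by symmetry or by the analogous argument at $\phi f_{0}$) it substitutes the local polyhomogeneous expansion of $T$ in explicit projective coordinates, changes variables $t=k/\omega$, evaluates the $k$-integral term by term against the Gaussian factor, and reads off a polyhomogeneous expansion in coordinates adapted to the matching corner of $M^{2}_{\omega,\phi}$. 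The pushforward machinery is used in the paper only for the near-diagonal pieces $\mathcal{R}_{1},\mathcal{R}_{2},\mathcal{R}_{3}$, not for this lemma. Your approach buys uniform index-set bookkeeping (extended unions, logarithmic terms) and avoids case-by-case coordinates; the paper's buys a short, self-contained argument with no new blow-up space.

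The weak point of your proposal is that it defers exactly the step carrying the content: the construction of $\mathcal{W}$ and the verification that both projections, to $M^{2}_{k,\phi}$ and to $M^{2}_{\omega,\phi}$, are b-fibrations, with the fibre face $\phi f_{0}$ tracked through the joint blow-ups. You label this ``the main obstacle'' rather than carrying it out, so as written this is an outline, not a proof; by contrast the paper's coordinate computation, though schematic about log terms, is complete in each region. Two further points to tighten if you pursue your route: (i) the sharp lower limit $k=\omega$ is not a polyhomogeneous factor, so either pass to $\sigma=k/\omega$ fibrewise and use the elementary fact that $\int_{1}^{\infty}$ of a polyhomogeneous-in-parameters integrand with Gaussian decay in $\sigma$ is polyhomogeneous in the parameters (this is in effect what the paper's substitution $t=k/\omega$ does), or justify carefully that the jump at $\sigma=1$ is transversal to the fibres and innocuous under pushforward; (ii) your closing reading that ``for $\omega$ bounded from below one is away from the $\omega=0$ faces entirely'' would trivialize the statement---the substance, and what the paper's computation actually delivers, is the expansion jointly in $\omega$ and the boundary defining functions, i.e.\ polyhomogeneity at the corners of $M^{2}_{\omega,\phi}$ such as the region with coordinates $(\tfrac{x}{\omega},s,\omega)$, not merely for $\omega$ in a compact set.
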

\begin{proof}
In the following proof we use explicit local coordinates
 in each region of $M^{2}_{k,\phi}$ in order to plug the polyhomogeneity expression of $T(k,z,z')$ and evaluate
  directly the integral
 (\ref{eq:10}) to show that the resulting function
is polyhomogeneous on $M^{2}_{\omega,\phi}$.

\subsubsection*{Near $bf\cap bf_{0} \cap lb$}
 We may use coordinates, $(\zeta =\frac{x}{k},s = \frac{x'}{x},k)$ 
 in this region and the polyhomogeneity of 
 $T(k,x,x')$ with respect to these coordinates becomes $T \sim \sum a_{ijl}\zeta^{i}s^{j}k^{l}$. We plug this expression 
 into (\ref{eq:10}) and use change of variable
  $\frac{k}{\omega} = t$ and obtain,
 \begin{align*}
 &\int_{\omega}^{\infty} 2k e^{(\frac{k}{\omega})^{2}}e^{i\varphi}T(k,x,x')dk =
 \sum\int_{\omega}^{\infty} 2ke^{(\frac{k}{\omega})^{2}}e^{i\varphi}a_{ijk}k^{l-i}x^{i}s^{j}dk = \\
 &\sum a'_{ijl}2 x^{i}s^{j}\int_{1}^{\infty} \omega t e^{-t^{2}}(\omega t)^{l-i}\omega dt = \sum 2a'_{ijl}x^{i}s^{j}\omega^{l-i+2}\int_{1}^{\infty}
 e^{-t^{2}}t^{l-i+1}dt\\
 & \sim \sum a'_{ijl} (\frac{x}{\omega})^{i}
 s^{j}\omega^{l+2},
\end{align*}
i.e the integral is polyhomogeneous with respect to coordinates
$(\frac{x}{\omega}, s, \omega)$ that corresponds to region
 $bf \cap bf_{0} \cap lb$  of $M^{2}_{\phi,\omega}$. 
As the argument is symmetric near $bf \cap bf_{0} \cap rb$
 we leave the proof.

\subsubsection*{Near $lb \cap lb_{0} \cap bf_{0}$.}
In this region, we may use coordinates
 $(x , s' = \frac{x'}{x}, \kappa = \frac{k}{x})$. Plugging the definition of
 polyhomogeneity of $T \sim \sum a_{ijl} x^{i}s'^{j}\kappa^{l}$
 into integral $(\ref{eq:10})$ and using change of variable $\frac{k}{\omega} = t$ yields to,
  \begin{align*}
 &\int_{\omega}^{\infty} 2k e^{(\frac{k}{\omega})^{2}}e^{i\varphi}T(k,x,x')dk =
 \sum\int_{\omega}^{\infty} 2ke^{(\frac{k}{\omega})^{2}}e^{i\varphi}a_{ijl}x^{i}s'^{j}\kappa^{l}dk = \\
 &\sum a'_{ijl}2 x^{i}s'^{j}\int_{1}^{\infty} \omega t e^{-t^{2}}e^{i\varphi}(\omega t)^{l}\omega dt = \sum 2a'_{ijl}x^{i-l}s^{j}\omega^{l}\int_{1}^{\infty}
 e^{-t^{2}}t^{l}dt\\
 & \sim \sum a'_{ijl} x^{i}s'^{j}(\frac{\omega}{x})^{l},
\end{align*}
which means the integral is polyhomogeneous with respect to coordinates 
$(x,\frac{x'}{x}, s, \frac{\omega}{x})$.
That means the polyhomogeneity 
on the region $lb \cap lb_{0} \cap bf_{0}$. 
The similar argument shows also the polyhomogeneity  of integral on 
 the region $rb \cap rb_{0} \cap bf_{0}$. 
 
\subsubsection*{Near $sc \cap bf_{0}$.} We use the coordinates
$(S = \frac{k(x-x')}{x'^{2}}, S' = \frac{x'}{k}, k)$ and the polyhomogeneity of $T(k,x,x')$ with respect to these coordinates
$ T \sim \sum a_{ijl}S^{i}S'^{j}k^{l}$ in the integral 
(\ref{eq:10}), and use change of variable $\frac{k}{\omega} = t$,
 \begin{align*}
 &\int_{\omega}^{\infty} 2k e^{(\frac{k}{\omega})^{2}}e^{i\varphi}T(k,x,x')dk =
 \sum\int_{\omega}^{\infty} 2ke^{(\frac{k}{\omega})^{2}}e^{i\varphi}a_{ijl}S^{i}S'^{j}k^{l}dk = \\
 &\sum a'_{ijl}2(\frac{x-x'}{x'^{2}})^{i}x'^{j}\int_{1}^{\infty} \omega^{i-j+1} t^{i-j+1} e^{-t^{2}}e^{i\varphi}(\omega t)^{i-j+1+l}
 \omega dt =\\
  &\sum 2a'_{ijl}S^{i}S'^{j}\omega^{l}\int_{1}^{\infty}
 e^{-t^{2}}t^{l+i-j+1}dt
  \sim \sum a'_{ijl} S^{i}S'^{j}\omega^{l},
\end{align*}
which yields to polyhomogeneity of integral in the region $sc \cap bf_{0}$ of $M^{2}_{\omega,\phi}$. Similar argument shows the polyhomogeneity near the face $\phi f_0$ of $M^{2}_{\omega,\phi}$ as well.
\end{proof}
In the following theorem, we summarize
the polyhomogeneity of the heat kernel in long time regime.

\begin{Th}\label{heat kernel asym}
The heat kernel which is given by (\ref{func res heat 2}),
 i.e
$$H^{M}(t,x,x') = \frac{1}{2\pi i} \int_{\Gamma} e^{t\lambda}(\Delta_{\phi} + \lambda)^{-1} d\lambda,$$
is polyhomogeneous conormal at $ t = \omega^{-\frac{1}{2}}$
at $\omega \longrightarrow 0$ on $M^{2}_{\omega,\phi}$ 
with index sets given in terms of index sets of 
resolvent $(\Delta_{\phi} + \lambda)^{-1}$
at low energy level. More explicitly the
asymptotics of heat kernel
in long time regime are of leading order $0$ at
$sc$ face and of order $0$ at $zf$ and $bf_{0}$ faces. 
More over the leading order at the face $\phi f_{0}$ 
is $2$.
In long time regime the heat kernel vanishes to
infinite order at lb, rb, and bf faces of $M^{2}_{\omega,\phi}$.
The explicit index sets are as follows,
$$\mathcal{E}_{sc} \geq 0,\, \mathcal{E}_{\phi f_0} \geq 2,\, \mathcal{E}_{bf_0} \geq 0, \, \mathcal{E}_{lb_0}, \mathcal{E}_{rb_0} >0, \mathcal{E}_{zf} \geq 0.$$
 \end{Th}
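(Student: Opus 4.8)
The plan is to assemble the pieces established in this subsection. Starting from the contour representation~(\ref{func res heat 2}) and the decomposition $\Gamma = \Gamma_{1,\omega^{2}} \cup \Gamma_{2,\omega^{2}}$ of~(\ref{gamma}), I would write $H^{M} = H_{1} + H_{2}$, where $H_{i}$ is the integral of $e^{t\lambda}(\Delta_{\phi}^{q}+\lambda)^{-1}$ over $\Gamma_{i,\omega^{2}}$ and, following the normalisation fixed above, $\omega\to 0^{+}$ corresponds to $t\to\infty$. Each summand is shown to be polyhomogeneous conormal on $M^{2}_{\omega,\phi}$; the index set of $H^{M}$ at each boundary hypersurface is then the extended union of those of $H_{1}$ and $H_{2}$, and the leading orders in the statement are read off from this union.

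First I would treat $H_{1}$. By~(\ref{path 1}) one has $H_{1} = \frac{\omega^{2}}{2\pi}\int_{-\varphi}^{\varphi} e^{e^{i\theta}}\,\mathcal{R}(\theta,\omega,z,z')\,d\theta$. For each fixed $\theta \in [-\varphi,\varphi]$, Theorem~\ref{loweneregy of resolvent} says $\mathcal{R}(\theta,\omega,z,z')$ is polyhomogeneous conormal on $M^{2}_{\omega,\phi}$, with a conormal singularity of order $2$ at $\triangle_{\phi}$ and index sets satisfying $\mathcal{E}_{sc}\geq 0$, $\mathcal{E}_{\phi f_{0}}\geq 0$, $\mathcal{E}_{bf_{0}}\geq -2$, $\mathcal{E}_{lb_{0}},\mathcal{E}_{rb_{0}}>0$, $\mathcal{E}_{zf}\geq -2$, all coefficients depending smoothly, hence continuously and boundedly, on $\theta$; integrating over the compact interval $[-\varphi,\varphi]$ preserves polyhomogeneity with the same index sets. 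Multiplying by the prefactor $\omega^{2}$ adds $+2$ to the index set at exactly those faces for which $\omega$ is a boundary defining function, namely $zf$, $bf_{0}$, $\phi f_{0}$, $lb_{0}$, $rb_{0}$, while $sc$, $bf$, $lb$, $rb$ are unchanged. This yields, for $H_{1}$, the bounds $\mathcal{E}_{sc}\geq 0$, $\mathcal{E}_{\phi f_{0}}\geq 2$, $\mathcal{E}_{bf_{0}}\geq 0$, $\mathcal{E}_{zf}\geq 0$, $\mathcal{E}_{lb_{0}},\mathcal{E}_{rb_{0}}>2$, with infinite-order vanishing at $bf$, $lb$, $rb$ inherited from the resolvent.

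Next I would treat $H_{2}$, the integral over the rays at angles $\pm\varphi$, which by~(\ref{eq:2}) has the form $\int_{\omega}^{\infty} 2s\, e^{(\cos\varphi)s^{2}/\omega^{2}}\, e^{i(\sin\varphi)s^{2}/\omega^{2}}\,\mathcal{R}(s^{2},z,z')\,ds$ plus the analogous $-\varphi$ term. Since $\frac{\pi}{2}<\varphi<\pi$ forces $\cos\varphi<0$, the integrand decays to infinite order as $s\to\infty$ and the integral converges. With the partition of unity $\mathcal{R} = \mathcal{R}_{D} + \mathcal{R}_{C}$, the term $\mathcal{R}_{C}$, being smooth across $\triangle_{\phi}$ and decaying to infinite order at $lb$, $rb$, $bf$, falls under Lemma~\ref{poly smooth} and so contributes a polyhomogeneous kernel on $M^{2}_{\omega,\phi}$; the term $\mathcal{R}_{D} = \mathcal{R}_{1}+\mathcal{R}_{2}+\mathcal{R}_{3}$ is handled face by face (near $zf$, near $sc$, near the diagonal of $\phi f_{0}$), inserting the conormal symbol expansions~(\ref{eq:4}),~(\ref{eq:5}),~(\ref{eq:7}), interchanging the $s$-integral with the $\zeta$-integral and the sum, and realising the $s$-integration as a pushforward along the b-fibration $M^{3}_{b}\to M^{2}_{b}$ of the lemma recalled above, so that Melrose's pushforward theorem~\cite{MelATP} gives polyhomogeneity of each coefficient on $M^{2}_{\omega,\phi}$ with index set independent of the summation index — precisely the computations already carried out above. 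The conormal order $2$ at $\triangle_{\phi}$ is preserved, so $H_{2}$ is polyhomogeneous conormal on $M^{2}_{\omega,\phi}$, with index sets controlled by the same resolvent data, the $s$-integration only improving the decay.

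Finally, the index set of $H^{M}$ at each face is the extended union of those of $H_{1}$ and $H_{2}$; since the $\omega^{2}$ prefactor in $H_{1}$ is what turns the resolvent's leading orders $-2$ at $bf_{0}$ and $zf$ into $0$, while the $H_{2}$ contribution is governed by the same resolvent asymptotics with no worse leading behaviour, the leading orders come out as $0$ at $sc$, $0$ at $zf$ and $bf_{0}$, $2$ at $\phi f_{0}$, positive at $lb_{0}$ and $rb_{0}$, and infinite order at $bf$, $lb$, $rb$, which is the displayed list of inequalities. I expect the main obstacle to be the index-set bookkeeping across the many hypersurfaces and the two contour pieces — in particular checking that the $+2$ shift on $\Gamma_{1,\omega^{2}}$ is genuinely the mechanism behind the vanishing leading orders at $bf_{0}$ and $zf$ and that nothing coming from $\Gamma_{2,\omega^{2}}$ lowers them — together with the subsidiary points of uniformity in $\theta$ when integrating the polyhomogeneous family over $[-\varphi,\varphi]$ and the justification of the interchange of integration and summation at each face.
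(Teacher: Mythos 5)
Your proposal is correct and follows essentially the same route as the paper: the theorem is stated there as a summary of the preceding subsection, namely the contour split into $\Gamma_{1,\omega^{2}}$ and $\Gamma_{2,\omega^{2}}$, the $+2$ index-set shift from the $\omega^{2}$ prefactor at the faces where $\omega$ is a boundary defining function via Theorem \ref{loweneregy of resolvent}, and the treatment of $\Gamma_{2,\omega^{2}}$ through the decomposition $\mathcal{R}=\mathcal{R}_{1}+\mathcal{R}_{2}+\mathcal{R}_{3}+\mathcal{R}_{C}$ with Melrose's pushforward theorem and Lemma \ref{poly smooth}. Your explicit bookkeeping of the final index sets as an extended union of the two contour contributions is a minor organizational addition, not a different argument.
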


\section{Analytic torsion}\label{sec4}

For $(M,g)$  compact oriented $C^{\infty}$ Riemannian manifold 
of dimension $dim(M) = n$ and $\Delta^{q}_{g}$ Hodge Laplacian acting on $\Omega^{q}(M)$, let
\begin{equation*}
0 \leq \lambda_{1} \leq \lambda_{2} \leq \cdots \leq \lambda_{n} \leq \longrightarrow +\infty,
\end{equation*}
be the sequence of eigenvalues of $\Delta^{q}_{g}$. The zeta function
$\Delta^{q}$ is then defined by,
\begin{equation*}
\zeta_{q}(s) = \sum_{\lambda_{i} > 0} \lambda_{i}^{-s},
\end{equation*}
and it turns out that the zeta function converges in
 the half plane
$\text{Re}(s) > \frac{n}{2}.$
One can explicitly express the zeta function by
 integration of heat kernel along diagonal.
  Namely for $ \text{Re}(s) > \frac{n}{2}$,
\begin{equation*}
\zeta_{M,q}(s) = \frac{1}{\Gamma(s)}\int_{0}^{\infty}Tr(H^{M}(t))t^{s-1}dt.
\end{equation*}
$\zeta_{M,q}(s)$ admits holomorphic continuation
 to $\mathbb{C}$ and especially with regular value at $0$ and 
 one may define the determinant
of Hodge Laplacian as\\
 $ exp(-\frac{d}{ds}\zeta_{M,q}(s)\vert_{s=0})$
  and analytic torsion is defined to be,
\begin{equation*}
Log T_M = \frac{1}{2} \sum_{q=0}^{n} (-1)^{q}q \frac{d}{ds}\zeta_{M,q}(s)\restriction_{s =0}.
\end{equation*}
Consider now  $\phi$ metric, $g_{\phi} =
 \frac{dx^{2}}{x^{4}} + \frac{\phi^{*}g_{B}}{x^{2}} + g_{F}+ O(x).$
One observe that the integration along diagonal of heat kernel
 diverges as $x\longrightarrow 0$. The way to overcome to this problem is to define
renormalization. 
In this section, We describe two ways
of renormalization. Renormalization together with  structure 
Theorems of heat kernel \ref{short time}, \ref{heat kernel asym} 
give rise to the definition of renormalized heat trace
and renormalized zeta function. 
We show that the renormalized zeta function admits meromorphic continuation to $\mathbb{C}$. We define 
determinant of Laplacian and renormalized zeta function
in the setup of fibred boundary manifold $(M,g_{\phi})$.

\subsection{Renormalized heat trace}
 We follow \cite{renor} in order to define two ways of renormalization.
 \subsubsection*{Riesz renormalization}
The first method of renormalization
is called the renormalization in the sense of
Riesz.
Assume $M$ is a manifold with boundary and $x$ is
a boundary defining function.
Assume further that $f$ admits an asymptotic 
expansion in terms of $x$ and $logx$ i.e,

\begin{equation} \label{exp f}
f \sim \sum a_{s,p}x^{s}(\text{log(x)})^{p}
\end{equation}
Assume that $\mu$ is smooth non-vanishing density
on $M$. The expansion of $f$ (\ref{exp f}) implies the
meromorphic 
continuation of the function, $
z \in \mathbb{C} \mapsto \int_{M} x^{z}f d\mu,$ for $\text{Re}(z) > C$.
Now the Riesz renormalized integral of $f$ is defined to be,
 $^R\int_{M} f d\mu = \underset{z = 0}{\text{FP}}
 \int_{M} x^{z}f d\mu.$

\subsubsection*{Hadamard renormalization}
The second method is due to Hadamard and is referred in
\cite{MelATP} as b-integral. Assume $M$ is manifold
 with boundary and $x$ is boundary defining function.
One may define,

\begin{equation}\label{int eps}
\epsilon \mapsto \int_{x \geq \epsilon} f d\mu,
\end{equation}
and show that (\ref{int eps}) has asymptotic expansion as
$\epsilon \longrightarrow 0$ when $f$ admits expansion 
(\ref{exp f}).
 We define then $
^h\int_{M}fd\mu = \underset{\epsilon = 0}{\text{FP}}
\int_{x \geq \epsilon}fd\mu,$
where by finite part we mean taking out the divergent part.

\subsubsection*{Heat kernel renormalization in the sense of Hadamard}

\begin{defn} \label{renormalized heat trace}
Assume $\overline{M}$ is a fibred boundary manifold
 and assume further that $H^{M}_{\phi}(x,y,z,x',y',z')$ 
is the heat kernel.
 One defines the renormalized 
heat trace to be,
\begin{equation*}
^R\text{Tr}(H^{M}_{\phi}(t)) = 
^h \int_{M} H^{M}_{\phi}(t,x,y,z,x,y,z) dvol_{\phi}(x,y,z).
\end{equation*}
\end{defn}
Equivalently one may take a sharp cutoff function $\chi(r)$ 
which is supported on
$[0,1]$ and is equal to $1$ for $ 0 \leq r \leq 1$.
For fix
 $\epsilon >0$ consider $\chi(\frac{\epsilon}{x})$.
The renormalized heat trace is defined as,
\begin{equation}\label{renor heat trace}
^R\text{Tr}(H^{M}_{\phi}(t)) = 
\underset{\epsilon = 0}{\text{FP}}\int_{M} \chi(\frac{\epsilon}{x}) H^{M}_{\phi}(t,x,y,z,x,y,z) dvol_{\phi}(x,y,z).
\end{equation}

it needs to be justified that 
(\ref{renor heat trace}) make sense.
We demonstrate to that end  that the integrand in
(\ref{renor heat trace}) is polyhomogeneous in
$\epsilon$ for fixed $t$.
  
\begin{Lemma} \label{heat kernel diag poly}
Assume $(\overline{M}, g_{\phi})$ is a fibred boundary
manifold and $\Delta_{\phi}^q$ is the Hodge Laplacian acting
on the space of $q$ forms.
The corresponding heat kernel is polyhomogeneous
conormal along diagonal $(x,y,z; x,y,z)$ in both short time
regime and long time regime.
Precisely,

\begin{itemize}

\item For $t$ bounded above, $
H^{M}_{\phi}(t,x,y,z,x,y,z),$
is polyhomogeneous conormal in $(\tau = \sqrt{t}, x)$ 
with smooth dependence on $y$ and $z$.
\item For $t$ bounded below and 
$\omega = t^{-\frac{1}{2}}$, $
H^{M}_{\phi}(t,x,y,z,x,y,z)$
is polyhomogeneous conormal as a function of 
$(\omega,x)$ on  $b$-space
 $(\mathbb{R}_+(\omega)\times M)_{b}$
 with smooth dependence on $y, z$.
\end{itemize}
\end{Lemma}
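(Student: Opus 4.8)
The plan is to extract from the two structure theorems already established—Theorem \ref{short time} for short time and Theorem \ref{heat kernel asym} for long time—the claimed polyhomogeneity of the restriction of the heat kernel to the diagonal $\{(x,y,z;x,y,z)\}$, in the respective variables $(\tau,x)$ and $(\omega,x)$. The key observation is that the diagonal submanifold $\triangle_\phi\subset \overline{M}^2$ lifts, under the relevant blow-down maps, to a $p$-submanifold of the heat space $HM_\phi$ (resp.\ of $M^2_{\omega,\phi}$), and that restriction of a polyhomogeneous conormal distribution to a $p$-submanifold transversal to (or containing) the conormal singularity is again polyhomogeneous conormal, with index sets inherited by pullback. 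So the two bullet points are proved by (i) identifying the lifted diagonal inside each resolution space and the boundary faces it meets, and (ii) invoking the restriction/pullback behaviour of polyhomogeneous conormal distributions.

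For the first bullet (short time), I would recall that $HM_\phi$ is obtained from $\overline{M}^2\times\mathbb{R}^+$ by passing to the $\phi$-double space in the time-rescaled picture and then blowing up the diagonal at $t=0$; the lifted diagonal $\triangle_{\phi-h}$ is a $p$-submanifold meeting exactly the faces $fd$ and $td$ (see Figure \ref{heat-space}), and is disjoint from $ff$, $lf$, $rf$, $tf$. By Theorem \ref{short time} the heat kernel is polyhomogeneous conormal on $HM_\phi$ with leading order $0$ at $fd$, $-n$ at $td$, and vanishing to infinite order at the remaining faces. Restricting to $\triangle_{\phi-h}$ kills the conormal singularity transverse to the diagonal, leaving a polyhomogeneous conormal function on $\triangle_{\phi-h}$; in the interior coordinates $(\tau=\sqrt t, x)$ near $\overline{M}$, with $y,z$ as smooth parameters, $fd$ corresponds to $\tau=0$ (bounded $x$) and $td$ to the corner $\tau=x=0$, and the asymptotic expansion of Theorem \ref{short time} pulls back to a joint polyhomogeneous expansion in $(\tau,x)$ with smooth dependence on $(y,z)$. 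This is exactly the first claim.

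For the second bullet (long time), I would argue that the $\phi$-diagonal $\triangle_{\omega,\phi}$ inside $M^2_{\omega,\phi}$ meets only the faces $sc$, $\phi f_0$ and $zf$ (this is precisely the observation already made in the text preceding Lemma \ref{poly smooth} and displayed in Figure \ref{diagonal pic}), and in particular is disjoint from $bf$, $lb$, $rb$, $lb_0$, $rb_0$; moreover along $\triangle_{\omega,\phi}$ the defining functions of $zf$, $\phi f_0$, and the part of $sc$ meeting the diagonal are all expressible in terms of $x$ and $\omega$ alone (the fibre and base directions $y,z$ restrict smoothly). By Theorem \ref{heat kernel asym} the long-time heat kernel is polyhomogeneous conormal on $M^2_{\omega,\phi}$ with the stated index sets and vanishing to infinite order at $bf,lb,rb$; restricting to the diagonal removes the conormal part and yields a polyhomogeneous conormal function of $(\omega,x)$—i.e.\ a polyhomogeneous conormal function on the $b$-space $(\mathbb{R}_+(\omega)\times M)_b$, whose boundary faces $\{\omega=0\}$, $\{x=0\}$ and their corner pull back from $zf$/$bf_0$, $lb_0$/$rb_0$, and $\phi f_0$ respectively—again with smooth dependence on $y,z$. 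I expect the main obstacle to be purely bookkeeping: carefully matching each boundary face of the resolution spaces with the corresponding face of the one-factor $b$-space $(\mathbb{R}_+(\omega)\times M)_b$ under restriction to the diagonal, and checking that the coordinate changes (projective vs.\ polar) used in the various overlap regions of Figure \ref{diagonal pic} are consistent so that the local expansions glue to a single global polyhomogeneous conormal expansion; no essentially new analytic input beyond Theorems \ref{short time} and \ref{heat kernel asym} and the standard fact that restriction to a $p$-submanifold preserves polyhomogeneity is needed.
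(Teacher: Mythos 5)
Your proposal takes essentially the same route as the paper: the paper's own proof is a one-line appeal to Theorems \ref{short time} and \ref{heat kernel asym}, and your restriction-to-the-diagonal argument (lifted diagonal as a $p$-submanifold meeting $fd$, $td$ resp.\ $sc$, $\phi f_0$, $zf$, with index sets inherited under restriction) is just that appeal spelled out in detail. One small bookkeeping slip to fix: on the diagonal the face $\{x=0\}$ with $\omega>0$ is inherited from $sc$, not from $lb_0/rb_0$, which you yourself correctly noted the diagonal avoids.
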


\begin{proof}
The lemma follows from the structure theorems
of heat kernel in last section i.e  from theorems 
\ref{short time} and \ref{heat kernel asym}.

\end{proof}

\begin{Th} \label{poly renorm heat trace}
The renormalized heat trace defined by 
(\ref{renor heat trace}) is well defined 
the integrand admits
expansion in $\epsilon$ for each 
fixed $t$. Moreover the finite part at 
$\epsilon = 0$ has
leading asymptotics at $t = 0$ 
and $ t =\infty$. More precisely for coefficients $a_{j}$
 and $b_{jl}$, 
\begin{align}
&^R Tr(H_{\phi}^{M}(t)) \sim_{t\longrightarrow 0} \sum_{j \geq 0} a_{j}t^{\frac{-m+j}{2}}, \label{short time asymp trace}\\
&^R Tr(H_{\phi}^{M}(t)) \sim_{t\longrightarrow \infty} \sum_{j \geq 0} \sum_{l=0}^{p_{j}}b_{jl}t^{-\frac{j}{2}}log^{l}(t). \label{longtime asymp trace}
\end{align}

\end{Th}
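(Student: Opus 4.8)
The plan is to extract the two expansions directly from the structure Theorems \ref{short time} and \ref{heat kernel asym} by carrying out the Hadamard finite-part integration in the variable $x$.

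\emph{Well-definedness.} First I would fix $t>0$. By Lemma \ref{heat kernel diag poly} the diagonal restriction $x\mapsto H^{M}_{\phi}(t,x,y,z,x,y,z)$ is polyhomogeneous conormal near $x=0$, smoothly in $(y,z)$; and in collar coordinates the $\phi$-density is $dvol_{\phi}=x^{-(b+2)}\,dx\,dvol_{B}(y)\,dvol_{F}(z)$ with $b=\dim B$, a single power of $x$. Integrating the compact variables $(y,z)$ out over $\partial M$ therefore yields a function of $x$ with an asymptotic expansion $\sum_{s}\sum_{p}c_{s,p}(t)\,x^{s}(\log x)^{p}$ as $x\to0$, $s$ bounded below, of exactly the type (\ref{exp f}). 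The renormalization discussion of Section \ref{sec4} then shows that $\epsilon\mapsto\int_{x\geq\epsilon}\chi(\epsilon/x)\,H^{M}_{\phi}(t,\cdot)\,dvol_{\phi}$ has a full asymptotic expansion in $\epsilon$ and $\log\epsilon$ as $\epsilon\to0$; hence $\underset{\epsilon=0}{\text{FP}}$ exists and (\ref{renor heat trace}) is well defined.

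\emph{Short time.} For $t\to0$ put $\tau=\sqrt t$. By Theorem \ref{short time} the heat kernel lifts to a polyhomogeneous conormal distribution on $HM_{\phi}$ of leading order $0$ at $fd$ and $-n$ at $td$, with $n=\dim\overline M$, vanishing to infinite order elsewhere; restricting to the lifted diagonal, which meets only $fd$ and $td$, one obtains via $\beta_{\phi-h}$ a polyhomogeneous conormal function of $(\tau,x)$, jointly near $\tau=0$ and $x=0$, whose expansion at $\tau=0$ has leading order $-n$ and whose expansion at $x=0$ is governed by the $fd$-index set. Then I would multiply by $x^{-(b+2)}\,dvol_{B}\,dvol_{F}$, integrate in $(y,z)$, and apply $\underset{\epsilon=0}{\text{FP}}\int_{x\geq\epsilon}$. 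This is a Hadamard-regularized pushforward along the projection off the $x$-factor, a $b$-fibration, so Melrose's pushforward theorem \cite{MelATP} applied to the integrable part of the expansion --- after separating out and integrating by hand the finitely many non-integrable terms $x^{s}(\log x)^{p}$ with $s\leq-1$, whose $\epsilon^{0}$-coefficient is again polyhomogeneous in $\tau$ --- produces a function polyhomogeneous in $\tau$ as $\tau\to0$, with leading power $\tau^{-n}=t^{-n/2}$ from the $td$-contribution (its regularized $x$-integral being a multiple of the renormalized volume) and successive terms in half-integer powers of $t$; log-freeness is inherited from the (log-free) index sets of Theorem \ref{short time}. This is (\ref{short time asymp trace}) with $m=n=\dim\overline M$.

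\emph{Long time.} For $t\to\infty$ put $\omega=t^{-1/2}$. By Lemma \ref{heat kernel diag poly} (second item), equivalently Theorem \ref{heat kernel asym}, the diagonal restriction is polyhomogeneous conormal in $(\omega,x)$ on $(\mathbb{R}_{+}(\omega)\times M)_{b}$, with index sets inherited along the diagonal from the low-energy resolvent on $M^{2}_{\omega,\phi}$, in particular $\mathcal{E}_{sc}\geq0$, $\mathcal{E}_{\phi f_{0}}\geq2$, $\mathcal{E}_{bf_{0}}\geq0$, $\mathcal{E}_{zf}\geq0$, $\mathcal{E}_{lb_{0}},\mathcal{E}_{rb_{0}}>0$, and infinite vanishing at $bf,lb,rb$. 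Again I would multiply by the $\phi$-density, integrate in $(y,z)$, and apply $\underset{\epsilon=0}{\text{FP}}\int_{x\geq\epsilon}$; this is a Hadamard-regularized pushforward along the $b$-fibration $(\mathbb{R}_{+}(\omega)\times M)_{b}\to\mathbb{R}_{+}(\omega)$, so the pushforward theorem gives a function of $\omega$ polyhomogeneous as $\omega\to0$ with a finite index set, $\sum_{j\geq0}\sum_{l=0}^{p_{j}}b'_{jl}\,\omega^{j}(\log\omega)^{l}$, the logarithms possibly arising from coincidences among the resolvent index sets, the shift by $x^{-(b+2)}$, and the pushforward exponents. Substituting $\omega=t^{-1/2}$, so $\log\omega=-\tfrac12\log t$, and absorbing constants into $b_{jl}$ gives (\ref{longtime asymp trace}).

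\emph{The main obstacle.} The delicate point, common to the last two steps, is that $\underset{\epsilon=0}{\text{FP}}\int_{x\geq\epsilon}$ is not literally a pushforward of a polyhomogeneous density: one must split off the finitely many terms $x^{s}(\log x)^{p}$, $s\leq-1$, of the $x$-expansion, integrate them explicitly while checking that the surviving $\epsilon^{0}$-coefficient depends polyhomogeneously on $\tau$ (resp. $\omega$), apply the pushforward theorem only to the integrable remainder, and then verify upon reassembly that no spurious logarithms appear in the short-time regime while the long-time logarithms are exactly those of (\ref{longtime asymp trace}). Concretely this requires setting up the right blow-up in $(\tau,x)$ (resp. $(\omega,x)$), checking the $b$-fibration property of the projection off $x$, and tracking index sets through the pushforward; everything else is bookkeeping with Theorems \ref{short time} and \ref{heat kernel asym}.
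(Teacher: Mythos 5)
Your proposal is correct and follows essentially the same route as the paper: well-definedness via the polyhomogeneity of the diagonal heat kernel (Lemma \ref{heat kernel diag poly}) and Melrose's pushforward theorem applied to the b-fibration given by projecting off the $x$-factor, then the $t\to 0$ and $t\to\infty$ asymptotics by inserting the expansions at the relevant faces (from Theorems \ref{short time} and \ref{heat kernel asym}) into the finite-part integral against $dvol_{\phi}=x^{-b-2}dx\,dy\,dz$. Your explicit handling of the non-integrable terms $x^{s}(\log x)^{p}$, $s\leq -1$, before invoking the pushforward theorem is a welcome refinement of a step the paper treats only implicitly, but it does not change the argument.
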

\begin{proof}
Consider the integral,

\begin{equation}\label{eq: ren}
\int_{M} \chi(\frac{\epsilon}{x}) H^{M}_{\phi}(t,x,y,z,x,y,z) dvol_{\phi}(x,y,z).
\end{equation}

For fixed t and $\epsilon$ the integrand is polyhomogeneous on
$(\mathbb{R}\times M_{x})_{b} \times \mathbb{R}_{t}^{+}$ for short time and $(\mathbb{R}_{\epsilon}\times M_{x} \times \mathbb{R}_{t}^{+})_{b}$ in long time. The projection $\pi_{x}$ lift to b-fibrations,
\begin{align*}
&\Pi_{x,b}: (\mathbb{R}\times M_{x})_{b} \times \mathbb{R}_{t}^{+} \longrightarrow M_{x} \times \mathbb{R}_{t}^{+},\\
&\Pi_{x,b2}:(\mathbb{R}_{\epsilon}\times M_{x} \times \mathbb{R}_{t}^{+})_{b} \longrightarrow ( M_{x} \times \mathbb{R}_{t}^{+})_{b}.
\end{align*}
Integration in $x$ corresponds to pushforward under $\Pi_{x,b}$ (short time) and $\Pi_{x,b2}$ (long time), the polyhomogeneity 
of integral with respect to $\epsilon$ follows from Melrose push forward theorem.

\subsubsection*{Asymptotic in (\ref{short time asymp trace})} 
By  Lemma \ref{heat kernel diag poly} one can express the diagonal of heat kernel in short time regime, for $ N \in \mathbb{N}$ as,
\begin{equation}\label{eqn: 16}
 Tr_{x}H_{\phi}^{M}(t,x) = \sum_{j=0}^{N} x^{j}a_{j}(t) + H_{N}(x,t),
\end{equation}
where $a_{j}(t) \sim_{t\longrightarrow 0} \sum_{k\geq 0}
 \tau^{-m+k}$ and $H_{N}(x,t) = O(\tau^{-m}x^{N+1}).$ As $\epsilon \longrightarrow 0$ we may Plug
 (\ref{eqn: 16}) into (\ref{eq: ren})
  and evaluate the integral to obtain (\ref{short time asymp trace}).
 \subsubsection*{Asymptotic in (\ref{longtime asymp trace})}
 The diagonal of heat kernel in long time regime is of leading order $0$ at $sc$ and $2$ at $\phi f_{0}$ and $0$ at $zf$ face of $M^{2}_{\omega,\phi}$. As $\epsilon \longrightarrow 0$ one may express
 explicit diagonal of heat kernel i.e Lemma \ref{heat kernel diag poly} at $\omega^{-\frac{1}{2}} = t = \infty$. On $\phi f_{0}$ face 
 one obtain,
 $$Tr_{x} H^{M}_{\phi}(x,\omega) \sim \sum h_{j}(x,\omega),$$
  for $h_{i}(x,\omega)$ homogeneous of order $2$. Similarly on
 $zf$ face we have,
  $$Tr_{x} H^{M}_{\phi}(x,\omega) \sim \sum a_{j}x^{j}.$$
   Plugging these expressions into (\ref{eq: ren}) 
 with respect to $\phi$-volume form $dvol_{\phi} = x^{-b-2}dxdydz$ we obtain
 (\ref{longtime asymp trace}).
 
\end{proof}
Consider formally, 
\begin{equation}\label{ren zeta}
 \frac{1}{\Gamma(s)}\int_{0}^{\infty}\, 
^R\text{Tr}(H^{k}_{M,\phi})(t)t^{s-1}dt.
\end{equation}
Apriori (\ref{ren zeta}) is not defined for any $s\in \mathbb{C}$.
By breaking (\ref{ren zeta}) at some constant $c$ we may express $(\ref{ren zeta})$ as sum of two integrals.
By $(\ref{short time asymp trace})$ the integral,
\begin{equation}\label{ren zeta 0}
_0^R \zeta_{M,\phi}^{k}(s):= \frac{1}{\Gamma(s)}\int_{0}^{c}\, 
^R\text{Tr}(H^{k}_{M,\phi})(t)t^{s-1}dt,
\end{equation}
is defined for $Re(s) > \frac{n}{2}$. Each summand can
directly be evaluated to show that $(\ref{ren zeta 0})$ admits meromorphic extension to complex plane $\mathbb{C}$. 
The second integral is denoted as $_\infty ^R \zeta_{M,\phi}^{k}(s)$,
\begin{equation}\label{ren zeta inf}
_\infty ^R\zeta_{M,\phi}^{k}(s):= \frac{1}{\Gamma(s)}\int_{c}^{\infty}\, 
^R\text{Tr}(H^{k}_{M,\phi})(t)t^{s-1}dt.
\end{equation}
One can apply $(\ref{longtime asymp trace})$ and show that the integral 
converges for $Re(s) <0$ and by evaluating directly $(\ref{ren zeta inf})$ the meromorphic extension to complex plane $\mathbb{C}$ follows.

\begin{defn} Assume $(\overline{M},g_{\phi})$ is fibred boundary 
manifold and
 $\Delta_{\phi}^{k}$
is the Hodge Laplacian
acting on the space of $k$ forms.  One defines,

\begin{itemize}

\item The renormalized zeta function on $(\overline{M},g_{\phi})$
 at degree
 $k$, denoted as
$^R\zeta_{M,\phi}^{k}(s)$ is defined to be,
\begin{equation}
^R \zeta_{M,\phi}^{k}(s)  := _0^R \zeta_{M,\phi}^{k}(s)
 + _\infty ^R\zeta_{M,\phi}^{k}(s)
\end{equation}

\item The renormalized determinant of the Laplacian on $\overline{M}$
 is denoted as $-\frac{d}{ds}^R\zeta_{M,\phi}(s)\vert_{s=0}$,
where $\frac{d}{ds}^R\zeta_{M,\phi}(s)\vert_{s=0}$
 is the coefficient of $s$ in the Laurent series 
 for $^R \zeta_{M,\phi}(s)$ at $s = 0$.
\end{itemize}
\end{defn}

One may define renormalized analytic torsion on fibred boundary manifold as,
\begin{defn} \label{analytic torsion phii}
For $(\overline{M},g_{\phi})$ fibred boundary manifold, denote
 $ \Delta_{\phi}^{k}$ 
to be Hodge Laplacian acting on the space of
$k$ forms. One may define the renormalized
analytic torsion by,
\begin{equation}
\text{Log}\,^R T_{M,g_{\phi}} := \frac{1}{2}\sum_{q=0}^{n}(-1)^{q}q\,
\frac{d}{ds}\, ^R\zeta^{q}_{M,\phi}(s)\vert_{s=0}.
\end{equation}
\end{defn}
We conclude the discussion of this section pointing out 
that the renormalized analytic torsion as defined in 
$\ref{analytic torsion phii}$ may be studied further and one
 may ask for the statement similar to Cheeger M\"uller
  Theorem in the fibred boundary manifold setup.
\begin{Prob}\textit{(Cheeger M\"uller in the set up of $\phi$ manifolds)}
\begin{itemize}
\item[1.] Define the toplogical torsion in the set up of manifold with fibred boundary. Is this trivial extension from closed manifolds?
\item[2.] Prove  Cheeger M\"uller type statement.
\end{itemize}
\end{Prob}


\begin{thebibliography}{DWW07}

\providecommand{\url}[1]{\texttt{#1}}
\expandafter\ifx\csname urlstyle\endcsname\relax
  \providecommand{\doi}[1]{doi: #1}\else
  \providecommand{\doi}{doi: \begingroup \urlstyle{rm}\Url}\fi
  
  
  
\bibitem[Ahl78]{Ahlfors}
Ahlfors, L. V. \emph{Complex analysis. An introduction to the theory of analytic functions of one complex variable. Third edition}, International Series in Pure and Applied Mathematics.
McGraw-Hill Book Co., New York, (1978). {\rm xi}+331 pp. ISBN: 0-07-000657-1.


\bibitem[AAR13]{renor}
Albin, P.,  Aldana C. L., Rochon, F. \emph{Ricci flow and the determinant of the Laplacian on non-compact surfaces.}
Communications in Partial Differential Equations 38.4
 (2013): 711-749.



\bibitem[ChKa99]{ALF cla}
Cherkis, S. A., Kapustin, A. \emph{Singular monopoles and gravitational instantons.} Communications in mathematical physics 203.3 ,(1999): 713-728.

\bibitem[Cut04]{res}
Cutkosky, S. D. \emph{Resolution of singularities.} Vol. 63. American Mathematical Soc., (2004).





\bibitem[Gri01]{basics}
Grieser, D. \emph{Basics of the b-calculus}. 
In Approaches to singular analysis, Springer. (2001): 30-84.



\bibitem[GTV20]{reslaw gtv}
Grieser, D., Talebi, M., Vertman, B. \emph{Spectral Geometry on manifolds with fibred boundary metrics I: Low energy resolvent}, preprint, (2020).





\bibitem[GuSh15]{riesz 3}
Guillarmou, C.,  Sher, D. \emph{Low energy resolvent for the Hodge Laplacian: applications to Riesz transform, Sobolev estimates, and analytic torsion.} International Mathematics Research Notices 2015.15 (2015): 6136-6210.





\bibitem[GiHa78]{gravi}
Gibbons, G. W., Hawking, S. W. \emph{Gravitational
 multi-instantons.} Physics Letters B 78.4 (1978): 430-432.



\bibitem[Joy09]{mancor}
Joyce, D. \emph{On manifolds with corners.}
 arXiv preprint arXiv:0910.3518 (2009).
 
 
\bibitem[KoRo20]{kottke-rochon}   
Kottke, C., Rochon, F.. \emph{Low energy limit for the resolvent of some fibered 
boundary operators} arxiv, (2020).




\bibitem[Kro89]{ale kron}
Kronheimer, P. B. \emph{The construction of ALE spaces as hyper-K\"ahler quotients}. J. Differential Geom. 29, no. 3, (1989): 665-683.

\bibitem[THM04]{gravi ins}
Hausel, T., Hunsicker, E., Mazzeo, R. \emph{Hodge cohomology of gravitational instantons.} Duke Math. J. 122, no. 3, (2004): 485-548.






\bibitem[MaMe98]{maz-pseudo}
Mazzeo, R., Melrose, R. B. \emph{Pseudodifferential operators on manifolds with fibred boundaries.} Mikio Sato: a great Japanese mathematician of the twentieth century. Asian J. Math. 2, , no. 4,
(1998): 833--866.

 

\bibitem[Mel93]{MelATP}
Melrose, R. \emph{The Atiyah-Patodi-Singer index theorem}. AK Peters / CRC Press. 
(1993) 

\bibitem[Mel96]{Mel-diff}
Melrose, R. B. \emph{Differential analysis on manifolds with corners}. 
preparation. 



\bibitem[MeSi08]{sc conf}
Melrose, R. B., Singer, M. \emph{Scattering configuration spaces}.
 arXiv:0808.2022, (2008).

\bibitem[ScSi20]{gravi 1}
Schroers, B., Singer, M. \emph{$ D_k $ Gravitational Instantons as Superpositions of Atiyah-Hitchin and Taub-NUT Geometries.} arXiv preprint arXiv:2004.02759, (2020).

\bibitem[She13]{She-stab}
Sher, D. A. \emph{The heat kernel on an asymptotically conic manifold.} Anal. PDE 6, no. 7, (2013): 1755-1791.


\bibitem[RaSi71]{raysin}
Ray, D. B., Singer, I. M. \emph{$R$-torsion and the Laplacian on Riemannian manifolds.} Advances in Math. 7 (1971): 145-210.


\bibitem[TaVe20]{heat short phi}
Talebi, M., Vertman, B. \emph{Spectral geometry  on manifolds
 with fibred boundary $\phi$ metrics II: heat kernel asymptotics}.
 preprint, (2020).






\end{thebibliography}
\end{document}